\theoremstyle{plain}
\newcounter{thmIcounter}
\newtheorem{thmI}[thmIcounter]{Theorem}
\theoremstyle{definition}
\newtheorem{defi}{Definition}[section]
\theoremstyle{plain}
\newtheorem{prop}[defi]{Proposition}
\newtheorem{lem}[defi]{Lemma}
\newtheorem{thm}[defi]{Theorem}
\newtheorem{cor}[defi]{Corollary}
\newtheorem*{ques*}{Question}
\newtheorem*{thm*}{Theorem}
\theoremstyle{remark}
\newtheorem{opm}[defi]{Remark}
\newcommand{\nat}{\mathbb{N}}
\newcommand{\zz}{\mathbb{Z}}
\newcommand{\rr}{\mathbb{R}}
\newcommand{\ff}{\mathbb{F}}
\newcommand{\cc}{\mathbb{C}}
\newcommand{\qq}{\mathbb{Q}}
\newcommand{\car}{\mathsf{char}}
\newcommand{\Lar}{{\mathcal{L}_{\rm ring}}}
\newcommand{\mc}{\mathcal}
\newcommand{\mf}{\mathfrak}
\newcommand{\mbb}{\mathbb}
\DeclareMathOperator{\Tr}{Tr}
\DeclareMathOperator{\Gal}{Gal}
\DeclareMathOperator{\GL}{GL}
\DeclareMathOperator{\SB}{SB}
\DeclareMathOperator{\Disc}{Disc}
\title[$\exists$-defining valuations in function fields over large fields]{Existentially defining valuations in function fields over large fields}
\author{Nicolas Daans}
\address{KU Leuven, Department of Mathematics, Celestijnenlaan 200B, 3001 Heverlee, Belgium}
\email{nicolas.daans@kuleuven.be}
\address{Université de Mons, Département de Mathematique, Place du Parc 20, 7000 Mons, Belgium}
\email{nicolas.daans@umons.ac.be}
\date{\today}
\keywords{Hilbert's 10th Problem, definable valuation, first-order language, function field, central simple algebra}
\subjclass{12L99 (Primary), 11R52, 11R58, 12L05 (Secondary)}
\begin{document}
\begin{abstract}
Let $K$ be a large field such that $K[\sqrt{-1}]$ is not algebraically closed and $F/K$ a function field in one variable.
Extending techniques and results from earlier work with Becher and Dittmann \cite{BDD}, we show that every valuation ring on $F$ containing $K$ is existentially definable in the language of rings with parameters from $F$.
As a consequence, using a known reduction technique, we obtain the undecidability of the existential theory of $F$ in the language of rings with appropriately chosen parameters.
\end{abstract}
\maketitle

\section{Introduction}

Given a field $F$ and a subring $R$, we call a set $S \subseteq F^n$ \emph{existentially definable (with parameters in $R$)} if there exist $k, m \in \nat$ and $f_1, \ldots, f_k \in R[X_1, \ldots, X_n, Y_1, \ldots, Y_m]$ such that
$$ S = \{ x \in F^n \mid \exists y \in F^m : f_1(x, y) = \ldots = f_k(x,y) = 0 \}.$$
Equivalently, $S$ is existentially definable with parameters in $R$ if and only if it is definable by an existential formula in the first-order language of rings and with parameters from $R$.
When the subring $R$ is not mentioned, we implicitly assume $R = F$.

In this paper, we shall consider the case where $F$ is a \emph{function field in one variable} over a base field $K$, i.e.~a finitely generated extension of transcendence degree $1$.
Equivalently, $F$ is isomorphic to a finite extension of the \emph{rational function field} $K(T)$, the field of fractions of the polynomial ring in one variable $K[T]$.

In \cite[Lemma 3.5]{DenefDiophantine}, Denef showed that, in $F = \qq(T)$, the set of functions without a pole at infinity (i.e.~of the form $\frac{f}{g}$ for polynomials $f$ and $g$ with $\deg(f) \leq \deg(g)$) is existentially definable with parameters in $\zz[T]$.
Specifically, he shows that $x \in \qq(T)$ has no pole at infinity if and only if
$$ \exists y, z, a_1, \ldots, a_5 \in \qq(T) : y^2 - z^3 + 4 = (y-T)x^2 + T^2 - (a_1^2 + \ldots + a_5^2) = 0. $$
The same formula would work when $\qq(T)$ is replaced by $\rr(T)$, but not, for example, by $\qq(\sqrt{-1})(T)$.
Indeed, in this last field, the equation $w - (a_1^2 + \ldots + a_5^2) = 0$ has a solution $(a_1, \ldots, a_5) = (\frac{w+1}{2}, \frac{\sqrt{-1}(w-1)}{2}, 0, 0, 0)$ for any choice of $w \in \qq(\sqrt{-1})(T)$.
Denef's argument crucially relies on the existence of some ordering on the base field.

Variations of Denef's argument were found for rational function fields with non-orderable base fields: Pheidas and Videla considered finite base fields \cite{PheidasHilbert10, Videla} (some ideas where already implicitly present in the work of Rumely \cite{Rumely}), whereas Kim and Roush considered algebraic subfields of $p$-adic base fields $\qq_p$ for $p \geq 3$ \cite{KimRoush_DiophantineUnsolvabilityPAdic}, as well as certain infinite base fields of positive characteristic \cite{KimRoush-charp}.
Degroote and Demeyer much later considered $p$-adic base fields $\qq_p$ themselves, including for $p = 2$ \cite{DD12}.
Existential definability of valuation rings in non-rational function fields in one variable over similar base fields was also established in certain cases, see the work of Moret-Bailly and Eisentr{\"a}ger for base fields which are (certain subfields of) ordered or $p$-adic fields \cite{Mor05,EisHil10padic}, and of Shlapentokh and Eisentr{\"a}ger for finite base fields \cite{ShlapentokhGlobal,Eis03}.
In \cite{EisShlap17} Shlapentokh and Eisentr{\"a}ger proved the analogous result for arbitrary non-algebraically closed base fields of positive characteristic which are algebraic over their prime field (several subcases where known before; in particular the case of odd characteristic was already covered in \cite{Shlapentokh_Pacific00}), and in \cite{MillerShlapentokh_v2} Miller and Shlapentokh considered a large class of base fields which are algebraic extensions of $\qq$.
Finally, in \cite{BDD}, Becher, Dittmann and the author considered large base fields $K$ for which $K(\sqrt{-1})$ admits a separable quadratic extension; this includes arbitrary complete discretely valued fields like $\qq_p$, $\ff_p(\!(t)\!)$, or $\cc(\!(t)\!)$.
Additionally, also in \cite{BDD}, an alternative argument is given for the case where $K$ is a global field.

A unified theory, which yields existential definability of valuations in function fields in one variable over a natural class of base fields $K$ covering the known examples, seems desirable.
The case where $K$ is algebraically closed, especially if $\car(K) = 0$, seems to be well out of reach of existing methods; the cohomological triviality of the base fields leaves little expressive room to define interesting sets. Essentially the same difficulty arises when considering function fields over $\rr$ without a real place (like $\rr(T)(\sqrt{-(1 + T^2)}$).
The present paper is therefore concerned with the complementary case: base fields $K$ for which $K(\sqrt{-1})$ (and hence $K$) is \textit{not} algebraically closed.
We show the following.
\begin{thmI}\label{TI:definability}
Let $K$ be a large field such that $K[\sqrt{-1}]$ is not algebraically closed.
Let $F/K$ be a function field in one variable. Any valuation ring $\mc{O}_v$ with $K \subseteq \mc{O}_v$ and field of fractions $F$ is existentially definable in $F$.
\end{thmI}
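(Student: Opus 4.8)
The plan is to extend the method of \cite{BDD}, which proves the theorem under the stronger hypothesis that $K[\sqrt{-1}]$ admits a separable quadratic extension, by supplying the extra input needed when one assumes only $K[\sqrt{-1}] \neq \overline{K[\sqrt{-1}]}$; by Artin--Schreier theory the latter means precisely that $K$ is neither algebraically closed nor real closed. First I would make some routine reductions. Replacing $K$ by its relative algebraic closure $\widetilde{K}$ in $F$ preserves all hypotheses: $\widetilde{K}/K$ is finite, hence $\widetilde{K}$ is large, and $\widetilde{K}[\sqrt{-1}]$ is still not algebraically closed -- otherwise $\widetilde{K}$, and then $K$, would be algebraically closed or real closed, the latter being impossible since a finite extension of a field that is not real closed is not real closed and $\sqrt{-1} \in \widetilde{K}[\sqrt{-1}]$. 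Discarding the trivial valuation (where $\mc{O}_v = F$ is visibly definable), $v$ is then a nontrivial valuation trivial on the constant field, hence divisorial, with residue field $k_v$ a finite extension of $\widetilde{K}$, again large and neither algebraically closed nor real closed. As $G_{k_v} \neq 1$, some nontrivial finite quotient of $G_{k_v}$ contains an element of prime order $\ell$ by Cauchy's theorem; after replacing $F$ by a finite extension $F'$ whose valuation ring $\mc{O}_{v'}$ above $v$ satisfies $\mc{O}_{v'} \cap F = \mc{O}_v$ -- and for which existential definability of $\mc{O}_{v'}$ over $F'$ transfers to existential definability of $\mc{O}_v$ over $F$, the multiplication of $F'$ being polynomial over $F$ -- I may assume $k_v$ carries a cyclic extension of degree $\ell$.

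The core is then an algebra-theoretic construction together with a local computation. From the degree-$\ell$ cyclic extension of $k_v$ one builds a central simple $F$-algebra $A$ of degree $\ell$ ramified at $v$: when $\ell = 2$ and a separable quadratic extension is available these are the quaternion algebras of \cite{BDD}; for odd $\ell$ they are degree-$\ell$ symbol algebras; and in characteristic $p$ with $\ell = p$ one uses an Artin--Schreier or purely inseparable degree-$p$ algebra instead. Auxiliary functions are chosen, using weak approximation on $F$ and the geometric consequences of largeness of $K$, so that the remaining ramification of $A$ is a controlled finite set of places, which is then cancelled by combining several such algebras. For any central simple $F$-algebra the reduced norm set $\Nrd_A(A) \subseteq F$ and the sets of nonzero values of subforms of its reduced norm form are existentially definable over $F$ with parameters the structure constants of $A$, as are their finite unions, intersections, and images under multiplication by parameters. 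A Springer-type computation of $\Nrd_A$ over the Henselization $F_v^h \cong k_v(\!(\pi)\!)$ -- where membership is governed by the condition $v(x) \equiv 0 \bmod \ell$ together with a residue condition in the norm group of the cyclic residue extension -- then lets one assemble a positive Boolean combination of such sets that equals $\mc{O}_v$. That this identity, proved over $F_v^h$, also holds over $F$ is where largeness of $K$ enters again, through a local--global principle for reduced norms over the function field $F$ (equivalently, for isotropy of the associated quadratic or hermitian forms), as in \cite{BDD}.

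The step I expect to be the main obstacle, and the genuinely new part beyond \cite{BDD}, is the local reduced-norm computation for cyclic algebras of odd prime degree $\ell$ and, in characteristic $p$, for degree-$p$ algebras: one must replace the comparatively clean quaternion and quadratic-form bookkeeping of \cite{BDD} with symbol calculus in the style of Kato, keeping track of norm groups of cyclic rather than quadratic residue extensions, coping with the absence of $\mu_\ell$ in $k_v$, and handling wild ramification and inseparable residue behaviour uniformly with the tame case. A subsidiary difficulty, already implicit in \cite{BDD}, is residue characteristic two, which may require an additional controlled tame base change. Once these local computations and the local--global input are in place, the global assembly of the existential definition of $\mc{O}_v$ is routine.
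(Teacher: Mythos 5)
The decisive gap is your appeal to ``a local--global principle for reduced norms over the function field $F$'' with $K$ merely assumed large. No such principle exists in this generality: the patching-type local--global principles available here (Harbater--Hartmann--Krashen, and Mehmeti's Berkovich version, which is what the paper actually uses, for splitting of central simple algebras) require the base field to be complete with respect to a rank-one valuation, and this is also how \cite{BDD} proceeds. Largeness enters in a completely different way, and your sketch is missing both of the places where it does: one passes to $\tilde K = K(\!(t)\!)$ and $\tilde F = F\tilde K$, proves the hard inclusion there (using a compactness argument in the space $\mathbb{V}(F)$ of \emph{all} valuations, a Krasner-type perturbation bound, and then the local--global principle over the complete base), and descends to $F$ via Pop's $K \prec_\exists K(\!(t)\!)$ together with $F \prec_\exists \tilde F$. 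This descent forces an existential quantifier over the constant field (an element $a \in K^\times$ standing in for a power of the uniformiser of $K(\!(t)\!)$), which in turn requires a uniform existential definition of the constants inside the function field (Koenigsmann's theorem, \Cref{P:Koenigsmann-defining-constants}) --- the second genuine use of largeness. Without the Laurent-series transfer and the definability of constants, your claim that an identity verified over $F_v^h$ ``also holds over $F$'' has no support; a local--global principle, even if one were available, compares $F$ with \emph{all} its local fields $F_w$, not with the single henselisation at $v$, so it is in any case not the right tool to globalise a description of $\mc{O}_v$.

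Relatedly, the ``routine global assembly'' you defer is where most of the actual work lies, and the mechanism is different from the one you propose. The paper does not cancel ramification by combining several algebras, nor does it define anything through value sets of subforms of reduced norms. It constructs one cyclic algebra $A = (F_0''/F_0', \sigma, \pi)$ of prime degree $p$ (the degree-$p$ normal extension of the residue field coming from Artin--Schreier via \Cref{P:reduceToCyclic}, lifted to a globally cyclic polynomial by \Cref{L:rightpoly}), chooses the uniformiser $\pi$ by strong approximation so that $A$ has controlled behaviour at the finitely many other bad places, and takes as definable predicate the condition that $A$ splits over the splitting field of the perturbed polynomial $f_{at^M\Disc(f)}$, quantifying $a$ over the constants. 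The easy inclusion is a henselian degree-coprimality argument; the hard inclusion needs \Cref{L:Krasner} with explicit valuation bounds, compactness to extract a single exponent, and Mehmeti's theorem; and one finally cuts from $\bigcap_{w \in \Delta_K A} \mc{O}_w$ down to the single ring $\mc{O}_v$ by substituting $t^p/g(\pi t, \alpha)$ for $t$. Your local description of norm groups at $v$ is plausible, but the step from it to an existential definition over $F$ is precisely the content of the theorem, not a routine afterthought.
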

Recall that a field $K$ is called \emph{large} (some authors prefer the term \emph{ample}) if every algebraic curve over $K$ has either zero or infinitely many smooth $K$-rational points.
The property can be thought of as an algebraic shadow of the Implicit Function Theorem.
See \cite{Pop_LittleSurvey} for an overview of equivalent characterisations and examples of large fields.
Examples of large fields include all infinite algebraic extensions of finite fields, arbitrary henselian valued fields, complete topological fields, separably closed fields, and pseudo-algebraically closed fields.
Examples of non-large fields include all number fields, finite fields, and function fields over arbitrary base fields.

\Cref{TI:definability} thus covers some, but not all, of the previously discussed cases where valuation rings in function fields are known to be existentially definable. The proof we will give relies on many of the same ideas as the proof of \cite[Theorem I]{BDD}, but replaces quadratic forms by central simple algebras of possibly higher degree, and removes separability hypotheses.
In particular, \Cref{TI:definability} applies to all base fields $K$ which are separably closed but not algebraically closed, a case which had not been covered before.

Our main theorem, \Cref{T:Edefinability}, is in fact a slightly more general (but more technical to state) version of \Cref{TI:definability}, yielding more precise control over the parameters involved in the existential definition, and considering also base fields which are not themselves large but contain a large subfield field. \Cref{TI:definability} is an immediate specialisation of \Cref{T:Edefinability}.
Some uniformity may be obtained under additional conditions, see the discussion in \Cref{R:Uniformity}.

The question of existential definability of valuation rings in function fields has been historically motivated by its connection to variations of Hilbert's 10th Problem.
Given a computable commutative ring $R_0$ and a commutative $R_0$-algebra $R$, let us say that the \emph{existential theory of $R$ with coefficients in $R_0$ is decidable} (respectively \emph{undecidable}) if there exists (respectively does not exist) an algorithm which can take an arbitrary tuple of polynomials $f_1, \ldots, f_k, g_1, \ldots, g_l \in R_0[X_1, \ldots, X_n]$ as input and decide whether there exists $x \in R^n$ such that $f_1(x) = \ldots = f_k(x) = 0$ and $g_1(x), \ldots, g_l(x) \neq 0$.
Hilbert's original problem concerned the case $R = R_0 = \zz$ and was proven to be undecidable by the work of Robinson, Davis, Putnam, and Matiyasevich \cite{Mat70}.

Indeed, already Denef in the aforementioned \cite{DenefDiophantine} recognized the connection between existential definability of valuations and undecidability of the existential theory of function fields.
He showed that, if $K$ is any real field (i.e.~a field admitting an ordering), then the existential theory of $K(T)$ with coefficients in $\zz[T]$ is undecidable.
A similar technique was used in \cite{KimRoush_DiophantineUnsolvabilityPAdic,Mor05,EisHil10padic,DD12} to show undecidability of the existential theory of function fields over $p$-adic fields and number fields, with coefficients in an appropriately chosen subring.
Pheidas in \cite{PheidasHilbert10} observed that, for different reasons, existential definability of valuation rings could also be used to establish existential undecidability of function fields in positive characteristic.
This eventually led to the result of \cite{EisShlap17} that the existential theory of function fields over base fields of positive characteristic not containing an algebraically closed subfield, with coefficients in an appropriately chosen subring, is undecidable.
In \cite{Daans-charp} it was observed that, in this last result, one may also take $\zz[T]$ as ring of coefficients.

We carefully state this (well-known) general relationship between definability of valuations and decidability in function fields in \Cref{P:H10-reduction}, providing also a sketch of the proof in the characteristic $0$ case.
Combining this with our main \Cref{T:Edefinability}, we obtain the following result.

\begin{thmI}\label{TI:decidability}
Consider fields $K_1 \subseteq K$ where $K_1$ is large.
Let $F/K$ be a function field in one variable, and $F_0 \subseteq F$ a finitely generated subfield such that $F$ is generated by $F_0$ and $K$.
Assume that $K_0 = F_0 \cap K_1$ is such that $K[\sqrt{-1}]$ does not contain the algebraic closure of $K_0$.

Then the existential theory of $F$ with coefficients in $F_0$ is undecidable.
\end{thmI}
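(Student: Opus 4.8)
The plan is to deduce \Cref{TI:decidability} by combining our main existential definability result \Cref{T:Edefinability} with the general reduction recorded in \Cref{P:H10-reduction}. The only work beyond invoking these two statements is to exhibit a concrete valuation ring on $F$ to which \Cref{T:Edefinability} applies, and to check that the hypotheses made here on the triple $(K_1, K, F_0)$ are exactly those required by \Cref{T:Edefinability}.

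First I would note that $F_0$ contains an element $t$ transcendental over $K$: if every element of $F_0$ were algebraic over $K$, then the compositum $F = K(F_0)$ would be algebraic over $K$, contradicting $\Trd(F/K) = 1$. The rational function field $K(t)$ admits nontrivial places, for instance the $t$-adic one, and any such place extends (by Chevalley) to a valuation $v$ on the finite extension $F/K(t)$. The resulting valuation ring $\mc{O}_v$ satisfies $K \subseteq \mc{O}_v$, $\Frac(\mc{O}_v) = F$, the valuation $v$ is nontrivial (in particular trivial on the prime field of $F$), and its residue field is a finite extension of $K$. Since $K_1$ is large and $K[\sqrt{-1}]$ does not contain the algebraic closure of $K_0 = F_0 \cap K_1$, \Cref{T:Edefinability} applies and shows that $\mc{O}_v$ is existentially definable in $F$ with parameters from $F_0$ (in fact the same holds for every valuation ring of $F$ containing $K$ with fraction field $F$).

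It remains to feed this into \Cref{P:H10-reduction}. Its hypotheses are met: $F/K$ is a function field in one variable, $F_0$ is a finitely generated subfield with $K(F_0) = F$, $\mc{O}_v$ is a nontrivial valuation ring of $F$ containing $K$ and existentially definable over $F_0$, and the residue field of $v$ is not algebraically closed — for if it were, then since it is a finite extension of $K$, the Artin--Schreier theorem would force $K$ itself to be algebraically closed or real closed, and in either case $K[\sqrt{-1}]$ would be algebraically closed and hence contain $\overline{K_0}$, contrary to assumption. \Cref{P:H10-reduction} then yields that the existential theory of $F$ with coefficients in $F_0$ is undecidable, completing the proof. The substance of the argument lies entirely in \Cref{T:Edefinability} — the passage from large base fields with $K[\sqrt{-1}]$ not algebraically closed, via central simple algebras, to existential definability of valuations — and in the classical reduction of \Cref{P:H10-reduction} (resting, in characteristic $0$, on Denef's undecidability theorem for polynomial rings, and in positive characteristic on its analogues); within the present deduction the only points needing attention are that a valuation ring of the required type exists and that its residue field is genuinely not algebraically closed, so that the reduction does not degenerate. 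Thus no new obstacle arises here.
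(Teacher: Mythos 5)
Your overall route---deduce \Cref{TI:decidability} by combining \Cref{T:Edefinability} with \Cref{P:H10-reduction}---is exactly the paper's proof, which consists of precisely this combination. But your gluing of the two results contains a genuine misstep. \Cref{T:Edefinability} does \emph{not} assert that $\mc{O}_v$ itself is existentially definable in $F$ with parameters from $F_0$: with parameters restricted to $F_0$, and with only the subfield $K_1$ assumed large, it only yields an $\exists$-$\Lar(F_0)$-definable set $D$ satisfying $\bigcap_{w \in C} \mc{O}_w \cap F_1 \subseteq D \subseteq \bigcap_{w \in C} \mc{O}_w$. Your parenthetical claim that every valuation ring of $F$ containing $K$ is existentially definable over $F_0$ is the content of \Cref{TI:definability}, which requires $K$ itself to be large and allows parameters from all of $F$; it is not available here. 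The correct deduction takes $\mc{O} = D$ in \Cref{P:H10-reduction} and verifies the sandwich $F_0 \neq \mc{O}_v \cap F_0 \subseteq D \subseteq \mc{O}_v$: the right-hand inclusion holds because $v \in C$, and the left-hand one because every $w \in C$ satisfies $w\vert_{F_0} \sim v\vert_{F_0}$, so that $\mc{O}_v \cap F_0 = \mc{O}_w \cap F_0 \subseteq \mc{O}_w$, together with $F_0 \subseteq F_0K_1 \subseteq F_1$. This verification is absent from your write-up, while the condition you do verify (that the residue field of $v$ is not algebraically closed) is not a hypothesis of \Cref{P:H10-reduction} at all.

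Moreover, you never check the hypotheses that \Cref{T:Edefinability} actually imposes on $v$ and $F_0$: namely that $F_0/K_0$ is a function field in one variable and that $F_0$ contains a \emph{uniformiser} for $v$. Your construction---an arbitrary Chevalley extension to $F$ of the $t$-adic valuation on $K(t)$---does not guarantee the latter: the extension may be ramified, in which case $v(t) > 1$ and nothing in your argument produces an element of $F_0$ of $v$-value $1$; one must choose the place of $F/K$ so that $v(F_0^\times) = v(F^\times)$ (or argue that such a place exists). Likewise, that $F_0/(F_0 \cap K_1)$ has transcendence degree one is not automatic from $F = F_0K$ (take, say, $K_1 = \qq_p$, $K = \qq_p(s)$, $F = \qq_p(s,T)$, $F_0 = \qq(s,T)$, where $K_0 = \qq$), so this point, admittedly also passed over in the paper's one-line proof, deserves at least a remark or a reduction to a smaller coefficient field. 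Since the whole content of this deduction is the careful matching of hypotheses and conclusions of \Cref{T:Edefinability} and \Cref{P:H10-reduction}, these omissions, and especially the overstatement of what \Cref{T:Edefinability} provides, constitute real gaps, even though they are repairable along the lines indicated above.
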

Neither \Cref{TI:definability} nor \Cref{TI:decidability} achieves the goal of providing a unified framework with which we can understand the current state of the art.
For example, we cannot derive from these the existential definability of valuation rings in $F = \qq(T)$, or the undecidability of the existential theory of $F$ - both of which we know to hold.
We have argued before why the case of function fields over algebraically or real closed base fields may be especially subtle, but the role of the largeness assumption is much less convincingly crucial.
While in \Cref{TI:definability} the hypotheses must contain \textit{some} condition beyond the non-algebraic closedness of $K[\sqrt{-1}]$ for the conclusion to hold, it is not at all clear whether this is the case for \Cref{TI:decidability}.
In fact, already in the case of rational function fields, we would expect, but cannot give in general, a negative answer to the following.
\begin{ques*}
Let $K$ be a field, $K_0 \subseteq K$ a finitely generated subfield such that $K[\sqrt{-1}]$ does not contain the algebraic closure of $K_0$.
Is the existential theory of $K(T)$ with coefficients in $K_0(T)$ decidable?
\end{ques*}
In \Cref{C:decidability-sepclosed} we answer this question in the case of imperfect separably closed base fields.

\subsection*{Acknowledgements}
The research presented in this paper was realised with financial support from \emph{Research Foundation - Flanders (FWO)}, fellowship 1208225N, as well a Postdoctoral Researcher grant from \emph{Fund for Scientific Research - FNRS}.
The author would also like to thank the Hausdorff Research Institute for Mathematics, Bonn, funded by the DFG (under Germany's Excellence Strategy, EXC-2047/1 – 390685813), for its hospitality during the trimester program ``Definability, decidability, and computability'' while this paper was written.
The author further thanks Philip Dittmann and Bjorn Poonen for their inspiring and valuable input.

\section{Preliminaries on fields and valuations}
We gather some technical results on (valued) fields and their finite extensions.
\begin{prop}\label{P:reduceToCyclic}
Let $K$ be a field.
\begin{enumerate}
\item If $K[\sqrt{-1}]$ is algebraically closed, then $K$ is either algebraically closed or real closed.
\item If $K[\sqrt{-1}]$ is not algebraically closed, then no finite field extension of $K$ is algebraically closed.
\item If $K[\sqrt{-1}]$ is not algebraically closed, then there exists a separable finite field extension $L/K$ and a prime number $p$ such that $L$ has a normal field extension of degree $p$.
\item If $K[\sqrt{-1}]$ has a normal field extension of prime degree $p$, then so does every finite field extension of $K$.
\end{enumerate}
\end{prop}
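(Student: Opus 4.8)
The plan is to treat the four parts in order. Parts (1)--(3) are essentially classical, and (4) carries the real weight.

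For (1) I would invoke the Artin--Schreier theorem in the form: if $K^{\mathrm{alg}}/K$ is finite of degree $>1$, then $\car K=0$, $K$ is real closed, and $K^{\mathrm{alg}}=K[\sqrt{-1}]$. So if $K[\sqrt{-1}]$ is algebraically closed, then $[K^{\mathrm{alg}}:K]\le 2$, and $K$ is algebraically closed, or (in the degree-$2$ case) real closed. For (2): were some finite $L/K$ algebraically closed, then $K^{\mathrm{alg}}=L$ would be finite over $K$, so by Artin--Schreier $K$ is algebraically or real closed, whence $K[\sqrt{-1}]$ is algebraically closed --- contradiction. For (3) I would split on perfectness. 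If $\car K=p>0$ and $a\in K\setminus K^{p}$, take $L:=K$: the purely inseparable, hence normal, extension $K(a^{1/p})/K$ has degree $p$. If $K$ is perfect, then $K^{\mathrm{alg}}=K_{s}$ and $K$ is not algebraically closed (else $K[\sqrt{-1}]=K$ would be), so $G_{K}:=\Gal(K^{\mathrm{alg}}/K)$ is a nontrivial profinite group; choosing a proper open normal subgroup $N\trianglelefteq G_{K}$, the nontrivial finite group $G_{K}/N$ has an element of prime order $p$ by Cauchy, and the preimage $H$ of the cyclic subgroup it generates is open in $G_{K}$ with a normal subgroup of index $p$, so $L:=(K^{\mathrm{alg}})^{H}$ is a finite separable extension of $K$ admitting a cyclic extension of degree $p$.

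For (4), write $E:=K[\sqrt{-1}]$ and let $L/K$ be finite. I would first reduce to $L/K$ separable: if the separable closure $L_{0}$ of $K$ in $L$ has a Galois extension $N$ of degree $p$, then $N$ is linearly disjoint from the purely inseparable extension $L/L_{0}$, so $NL/L$ is Galois of degree $p$; and if $L_{0}$ has a purely inseparable normal extension of degree $p$, then $\car K=p$ and $L_{0}$, hence $L$, is imperfect. Next I dispose of the case $\car K=p$ with $K$ imperfect: every finite extension of $K$ is then imperfect and has a purely inseparable extension of degree $p$ (and $E$ is imperfect too, consistent with the hypothesis). In the remaining cases a normal extension of $E$ of degree $p$ must be Galois, so the hypothesis reads $H^{1}(E,\zz/p)\neq 0$. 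The essential structural fact is that, since $\sqrt{-1}\in E$, the group $G_{E}$ --- and therefore every one of its open subgroups --- has no nontrivial finite subgroup: torsion in an absolute Galois group has order $\le 2$ by Artin--Schreier, and an order-$2$ element would fix a real closed field, which cannot contain $\sqrt{-1}$.

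Now put $M:=L[\sqrt{-1}]$, a finite extension of $E$; the first task is to show $M$ again has a normal extension of prime degree. Passing to the subfield of $E$ fixed by a $p$-Sylow subgroup $P\le G_{E}$ (whose index is prime to $p$), the transfer gives that $\mathrm{cor}\circ\mathrm{res}$ is multiplication by this prime-to-$p$ index, hence an isomorphism on mod-$p$ cohomology, so restriction is injective there; and the $p$-Sylow subgroup of $G_{M}$ is a nontrivial torsion-free pro-$p$ group by the previous paragraph, hence surjects onto $\zz/p$, while by Schur--Zassenhaus any complement to it would be a nontrivial finite subgroup of $G_{M}$, which is excluded --- a stable-element (Cartan--Eilenberg) analysis then forces $H^{1}(M,\zz/p)\neq 0$. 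It remains to descend through $[M:L]\le 2$: if $\sqrt{-1}\in L$ there is nothing to prove (apply the above to $L\supseteq E$); if $p=2$ then $L(\sqrt{-1})$ is itself a normal extension of $L$ of degree $2$; and if $p$ is odd the corestriction $H^{1}(M,\zz/p)\to H^{1}(L,\zz/p)$ vanishes on a class $\chi$ exactly when $\sigma^{*}\chi=-\chi$ (with $\sigma$ generating $\Gal(M/L)$), in which case the field cut out by $\chi$ is already $\sigma$-stable and produces a dihedral $D_{p}$-extension of $L$, still containing a quadratic subextension. The crux, and the step I expect to be the main obstacle, is the propagation in this last paragraph across extensions whose degree is divisible by $p$: transfer alone does not suffice, and one must genuinely exploit the Artin--Schreier rigidity supplied by $\sqrt{-1}$ --- the very feature that fails for $\rr$, or for algebraically closed base fields, which is exactly why those lie outside the theorem.
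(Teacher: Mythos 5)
Parts (1)--(3) of your proposal are correct and essentially coincide with the paper's treatment (the paper also handles (3) by splitting off the imperfect case and then passing to the fixed field of a $p$-Sylow subgroup of a finite Galois group). For (4), however, the paper disposes of the imperfect characteristic-$p$ case directly and then simply cites Whaples' theorem for the cyclic case; you attempt to prove that statement from scratch by profinite group cohomology, and this is where the proposal has a genuine gap.

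The decisive step --- from ``the $p$-Sylow subgroup $P_M$ of $G_M$ is nontrivial and torsion-free, hence surjects onto $\zz/p$'' to $H^1(M,\zz/p)\neq 0$ --- is asserted, not proved. The Cartan--Eilenberg stable-element theorem identifies $H^1(G_M,\zz/p)$ with the \emph{stable} classes inside $H^1(P_M,\zz/p)$, so you would have to exhibit a nonzero stable class, and nothing in your argument does so; the Schur--Zassenhaus remark is a non sequitur, since $P_M$ is not normal and the (non)existence of a complement has no bearing on $H^1$. Moreover, the implication you rely on is false at the level of torsion-free profinite groups: for an odd prime $p$ and a prime $q$ dividing $p-1$, the group $G=\zz_p\rtimes\zz_q$, with $\zz_q$ acting through a faithful character $\zz/q\to\zz_p^{\times}$, is torsion-free with nontrivial $p$-Sylow subgroup, yet $\mathrm{Hom}(G,\zz/p)=0$ (while an open subgroup does have a $\zz/p$-quotient). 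So torsion-freeness plus Sylow theory cannot suffice; one needs genuine field-theoretic input (Kummer and Artin--Schreier theory together with a cyclotomic eigenspace analysis --- in effect, the content of Whaples' theorem that the paper cites), and your closing sentence concedes that ``transfer alone does not suffice'' without supplying the missing ingredient. A second concrete defect is the final descent for odd $p$ when the corestriction $H^1(M,\zz/p)\to H^1(L,\zz/p)$ kills your class: the $\sigma$-stable class then yields a Galois extension $N/L$ with group $D_p$ of order $2p$, but the index-$p$ subgroups of $D_p$ are the non-normal order-$2$ subgroups, so $N/L$ contains no subextension of degree $p$ that is normal over $L$; a quadratic subextension does not meet the requirement of the statement, which asks for a normal extension of degree $p$ of $L$.
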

\begin{proof}
The first two statements are a well-known result due to Artin and Schreier \cite{AS-real-closed}, see for example \cite[Corollary VI.9.3 and Proposition XI.2.4]{LangAlgebra} for a modern proof.

For the third assertion, first observe that if $K$ is of characteristic $p$ and $\alpha$ is not a $p$-th power, then $K(\alpha^{1/p})/K$ is a normal extension of degree $p$.
It remains to consider the case where $K$ is perfect.
Consider a finite proper Galois extension $M/K$ and let $p$ be a prime dividing the degree $[M : K]$.
Let $L$ be the fixed field of a $p$-Sylow subgroup of the Galois group of $M/K$.
Then $M/L$ is a cyclic extension of degree a power of $p$, hence it contains a cyclic (in particular normal) subextension of degree $p$, as desired.

The fourth statement is clear in the case $K$ is imperfect of characteristic $p$ (since then the same holds for every finite field extension).
If $K[\sqrt{-1}]$ has a cyclic extension of degree $p$, the desired statement follows immediately from \cite[Theorem 2]{WhaplesAlgebraic}.
\end{proof}
For a valuation $v$ on a field $K$ we denote by $\mc{O}_v$, $\mf{m}_v$, $Kv$, $vK$ and $K_w$ the associated valuation ring, valuation ideal, residue field, value group, and henselisation, respectively.
When $K_0$ is a subfield of $K$, we may write $K_0v$, $vK_0$ and $(K_0)_v$ for respectively the residue field, value group, and henselisation of the restriction of $v$ to $K_0$.
In the next section we will sometimes consider \emph{rank one valuations}; these are non-trivial valuations for which the value group can be embedded (as an ordered abelian group) into $\rr$, or equivalently, the value group has no proper non-trivial convex subgroup.
\begin{lem}\label{L:rightpoly}
Let $(K, v)$ be a valued field, and let $p$ be a prime number.
If $\car(Kv) \neq p$, assume that $K$ contains a $p$th root of unity.
If $\car(Kv) = p$, assume that $\car(K) = p$.
Assume that $Kv$ has a normal field extension $\ell$ of degree $p$ over $Kv$.
Fix $m \in \mf{m}_v$.
There exists a monic degree $p$ polynomial $f(X) \in \mc{O}_v[X]$ satisfying the following:
\begin{enumerate}
\item\label{it:cycl1} The splitting field $K'$ of $f$ over $K$ is a cyclic extension of degree $p$.
\item\label{it:cycl2} The reduction of $f$ modulo $\mf{m}_v$ is irreducible with splitting field $\ell$.
\end{enumerate}
\end{lem}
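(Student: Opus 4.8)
The plan is to treat three cases, according to whether $\car(Kv) = p$ and, when it is, whether the residue extension $\ell/Kv$ is separable. Since $[\ell : Kv] = p$ is prime, normality of $\ell/Kv$ forces it to be either Galois --- hence cyclic --- or purely inseparable, the latter only if $\car(Kv) = p$. In each case I write down an explicit monic $f$, of the shape $X^p - a$, $X^p - X - a$, or $X^p - m^{p-1}X - a$ for a well-chosen lift $a \in \mc{O}_v$ of a generator of $\ell/Kv$, and check \eqref{it:cycl1}--\eqref{it:cycl2} directly. In every case the reduction of $f$ modulo $\mf{m}_v$ will be the ``obvious'' minimal polynomial of the chosen generator, so \eqref{it:cycl2} is immediate from the standard irreducibility criteria for $X^p - c$ (resp.\ $X^p - X - c$); the content is in arranging that, simultaneously, the splitting field over $K$ is cyclic of degree exactly $p$.

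If $\car(Kv) \neq p$ (hence also $\car(K) \neq p$), then $\ell/Kv$ is automatically separable, $X^p - 1$ is separable over $Kv$, and since distinct $p$th roots of unity have unit difference the hypothesis $\zeta_p \in K$ passes to the residue field: $\zeta_p \in Kv$. Thus $\ell = Kv(\sqrt[p]{\bar a})$ for some $\bar a \in (Kv)^\times \setminus (Kv)^p$; lift $\bar a$ to $a \in \mc{O}_v^\times$ and set $f(X) = X^p - a$, whose reduction $X^p - \bar a$ is irreducible over $Kv$ with splitting field $\ell$. If $a$ were a $p$th power in $K$, say $a = b^p$, then $v(b) = 0$ and $\bar{a} = \bar b^p \in (Kv)^p$, a contradiction; so $X^p - a$ is irreducible over $K$, and as $\zeta_p \in K$ its splitting field $K(\sqrt[p]{a})$ is cyclic of degree $p$. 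If instead $\car(Kv) = \car(K) = p$ and $\ell/Kv$ is separable, it is an Artin--Schreier extension $Kv(\vartheta)$ with $\wp(\vartheta) = \bar a$, $\bar a \notin \wp(Kv)$, writing $\wp(X) = X^p - X$; take $f(X) = X^p - X - a$ for a lift $a \in \mc{O}_v$, so that modulo $\mf{m}_v$ it is $X^p - X - \bar a$, irreducible over $Kv$ with splitting field $\ell$. Over $K$ one has $a \notin \wp(K)$, for if $a = \wp(b)$ with $b \in K$ then $v(b) \geq 0$ (otherwise $v(\wp(b)) = pv(b) < 0 \leq v(a)$), whence $\bar a = \wp(\bar b) \in \wp(Kv)$, a contradiction; so $f$ is irreducible over $K$ and $K' = K(\wp^{-1}(a))$ is cyclic of degree $p$. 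In both of these cases $m$ is irrelevant.

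The remaining case --- $\car(Kv) = \car(K) = p$ and $\ell/Kv$ purely inseparable --- is the crux, and the only place where $m$ enters. Here $\ell = Kv(\sqrt[p]{\bar a})$ with $\bar a \in Kv \setminus (Kv)^p$ (so $\bar a \neq 0$), and a lift $a \in \mc{O}_v^\times$ makes $X^p - a$ reduce correctly; but $X^p - a$ is purely inseparable over $K$, so fails \eqref{it:cycl1}. The fix is the Artin--Schreier twist: assuming $m \neq 0$ (otherwise replace it by any nonzero element of $\mf{m}_v$), set $f(X) = X^p - m^{p-1}X - a$. Since $m \in \mf{m}_v$ its reduction is again $X^p - \bar a$, irreducible over $Kv$ with splitting field $\ell$, so \eqref{it:cycl2} holds. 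Over $K$, the identity $f(X) = m^p\,\wp(X/m) - a$ shows that $K'$ is generated by a root $\alpha$ of $\wp(Y) = a/m^p$, that the roots of $f$ are exactly $\alpha + im$ with $i \in \ff_p$, and hence that $K'/K$ is normal with Galois group cyclic of order dividing $p$, generated by $\alpha \mapsto \alpha + m$. It remains to rule out $a/m^p \in \wp(K)$: if $a/m^p = b^p - b$ with $b \in K$, then since $v(a/m^p) = -p\,v(m) < 0$ the only possibility is $v(b) = -v(m)$, so $\beta := bm \in \mc{O}_v^\times$ and $a = \beta^p - \beta m^{p-1} \equiv \beta^p \pmod{\mf{m}_v}$, giving the contradiction $\bar a = \bar\beta^p \in (Kv)^p$. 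Hence $[K':K] = p$ and \eqref{it:cycl1} holds.

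I expect the purely inseparable residue case to be the main obstacle: condition \eqref{it:cycl1} rules out the obvious lift $X^p - a$ of an inseparable $\ell/Kv$, so one must use the extra parameter $m$ to manufacture a \emph{separable} (Artin--Schreier) extension of $K$ that nonetheless degenerates to the inseparable $X^p - \bar a$ modulo $\mf{m}_v$. The point that makes the twist $X^p - m^{p-1}X - a$ work is the short valuation computation above, which says that a unit lift of a non-$p$th power of $Kv$ is automatically outside $\wp(K)$ after division by $m^p$.
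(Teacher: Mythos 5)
Your proof is correct and takes essentially the same route as the paper: the same three-case split (Kummer when $\car(Kv) \neq p$, Artin--Schreier when the residue extension is separable in characteristic $p$, and the twisted polynomial $X^p - m^{p-1}X - a$ handled via the substitution $X \mapsto mX$ in the purely inseparable case). The only difference is that you verify the Kummer/Artin--Schreier facts and the ``degree exactly $p$'' non-degeneracy by hand through short valuation arguments, where the paper delegates these points to Lang's Theorems VI.6.2 and VI.6.4.
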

\begin{proof}
First consider the case where $\car Kv \neq p$. Then $\ell$ is a cyclic extension of degree $p$, so by \autocite[Theorem VI.6.2(i)]{LangAlgebra} there exists $a \in \mc{O}_v$ such that $\ell$ is the splitting field of the reduction of $X^p - a$ modulo $\mf{m}_v$.
By \autocite[Theorem VI.6.2(ii)]{LangAlgebra} the splitting field of $X^p - a$ over $K$ is a cyclic extension of degree $p$.
We may hence set $f(X) = X^p - a$.

Now consider the case where $\car Kv = \car K = p$.
We distinguish between the cases where $\ell$ is a separable (hence cyclic) or inseparable degree $p$ extension of $Kv$.
If $\ell /Kv$ is cyclic, then by \autocite[Theorem VI.6.4(i)]{LangAlgebra} there exists $a \in \mc{O}_v$ such that $\ell$ is the splitting field of the reduction of $X^p - X - a$ modulo $\mf{m}_v$.
By \autocite[Theorem VI.6.4(ii)]{LangAlgebra} the splitting field of $X^p - X - a$ over $K$ is a cyclic extension of degree $p$.
Hence we may set $f(X) = X^p - X - a$.
On the other hand, if $\ell/Kv$ is an inseparable extension of degree $p$, then there exists $a \in \mc{O}_v$ such that its reduction $\overline{a}$ modulo $\mf{m}_v$ satisfies $\ell = Kv[\sqrt[p]{\overline{a}}]$.
We may then set $f(X) = X^p  - m^{p-1} X - a$; after applying the base change $X \mapsfrom mX$ one obtains the polynomial $X^p - X - am^{-p}$ and concludes again by \autocite[Theorem VI.6.4(ii)]{LangAlgebra} that the splitting field of $f$ over $K$ is a cyclic extension of degree $p$, as desired.
\end{proof}
For a univariate polynomial $f$, denote by $\Disc(f)$ its polynomial discriminant, see for example \cite[Section IV.8]{LangAlgebra} for a definition and basic properties.
The following technical lemma may be seen as a specialization of Krasner's Lemma from valuation theory.
\begin{lem}\label{L:Krasner}
Let $(K, v)$ be a henselian valued field, $f \in \mc{O}_v[X]$ monic of prime degree $p$ such that the splitting field of $f$ is a cyclic extension of degree $p$.
For $t \in K$ with $p(p-1)v(t) > pv(\Disc(f))-(p-1)^2v(f(0))$, the splitting field of the polynomial $f_t(X) = f(X) + tX^{p-1}$ contains the splitting field of $f$.
\end{lem}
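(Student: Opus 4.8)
The plan is to deduce the statement from Krasner's Lemma, by exhibiting a root of $f_t$ that lies $v$-adically very close to a root of $f$. Write $f = \prod_{i=1}^{p}(X-\alpha_i)$ with the $\alpha_i$ in a fixed algebraic closure $\bar K$, on which $v$ extends uniquely since $K$ is henselian; in particular $v$ is $\Gal(\bar K/K)$-invariant. First one checks that $f$ is irreducible and separable: its splitting field $K'$ is a cyclic, hence Galois, extension of prime degree $p$ of $K$, so every root of $f$ has degree $1$ or $p$ over $K$, and degree $1$ is impossible (a root in $K$ would realise $K'$ as the splitting field of a polynomial of degree $p-1$, whence $[K':K] \mid (p-1)!$, contradicting $[K':K]=p$). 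Thus the $\alpha_i$ are distinct, $K'=K(\alpha_i)$ for every $i$, and $\Gal(K'/K)\cong\zz/p\zz$ permutes $\{\alpha_1,\dots,\alpha_p\}$ transitively.

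The crucial observation is that all of the values $v(\alpha_i-\alpha_j)$ for $i\neq j$ are equal, say to $d$, and likewise all of the $v(\alpha_i)$ are equal, say to $a$. The second point is immediate as the $\alpha_i$ form a single Galois orbit and $v$ is Galois-invariant. For the first, put $d:=\max_{i\neq j}v(\alpha_i-\alpha_j)$ and consider the relation on $\{\alpha_1,\dots,\alpha_p\}$ given by $\alpha_i\approx\alpha_j \iff i=j$ or $v(\alpha_i-\alpha_j)\geq d$: it is an equivalence relation by the ultrametric inequality and is $\Gal(K'/K)$-stable, so its classes are blocks for the transitive $\zz/p\zz$-action, hence all of the same size dividing $p$; since $p$ is prime they are either all singletons or there is a single class. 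As $d$ is attained, the first alternative fails, so $v(\alpha_i-\alpha_j)\geq d$ for all $i\neq j$, and by maximality $v(\alpha_i-\alpha_j)=d$ throughout.

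It follows that $v(\Disc f)=\sum_{i\neq j}v(\alpha_i-\alpha_j)=p(p-1)d$ and $v(f(0))=\sum_i v(\alpha_i)=pa$ (both finite, using irreducibility and separability of $f$; and we may assume $t\neq 0$, since otherwise $f_t=f$ and there is nothing to prove), so the hypothesis $p(p-1)v(t)>p\,v(\Disc f)-(p-1)^2v(f(0))$ becomes simply $v(t)>pd-(p-1)a$. Now for each $i$, since $f(\alpha_i)=0$,
$$ \sum_{\beta}v(\alpha_i-\beta)\;=\;v\bigl(f_t(\alpha_i)\bigr)\;=\;v\bigl(t\,\alpha_i^{\,p-1}\bigr)\;=\;v(t)+(p-1)a, $$
where $\beta$ runs through the $p$ roots of $f_t$ in $\bar K$ counted with multiplicity. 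Summing over the $p$ choices of $i$ yields a sum of $p^2$ terms equal to $p\,v(t)+p(p-1)a$; were every one of these terms $\leq d$, we would get $p\,v(t)+p(p-1)a\leq p^2d$, i.e.\ $v(t)\leq pd-(p-1)a$, contradicting the hypothesis. Hence $v(\alpha_{i_0}-\beta_0)>d$ for some root $\beta_0$ of $f_t$ and some index $i_0$.

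Since $v(\beta_0-\alpha_{i_0})>d=\max_{j\neq i_0}v(\alpha_{i_0}-\alpha_j)$ and $\alpha_{i_0}$ is separable over $K$, Krasner's Lemma (applicable because $v$ extends uniquely to $\bar K$) gives $K'=K(\alpha_{i_0})\subseteq K(\beta_0)$, and $K(\beta_0)$ is contained in the splitting field of $f_t$ --- which is the assertion. The step I expect to be the main obstacle is the equality of all the $v(\alpha_i-\alpha_j)$ in the second paragraph: it is precisely what reconciles the shape of the hypothesis (phrased through $\Disc f$ and $f(0)$) with the loss incurred by the averaging argument of the third paragraph, where for $p\geq 5$ the cruder estimate $v(\Disc f)\leq p(p-1)\max_{i\neq j}v(\alpha_i-\alpha_j)$ would leave a genuine gap; it hinges essentially on the splitting field being cyclic of \emph{prime} degree over a henselian base.
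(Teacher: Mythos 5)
Your proof is correct and follows essentially the same route as the paper: establish that all values $v(\alpha_i)$ and all $v(\alpha_i-\alpha_j)$ coincide using the cyclic prime-degree Galois action, translate the hypothesis on $t$ via $v(\Disc(f))$ and $v(f(0))$, use $v(f_t(\alpha))=v(t\alpha^{p-1})$ to find a root of $f_t$ strictly closer to some root of $f$ than the roots of $f$ are to each other, and conclude by Krasner's Lemma. The only (harmless) differences are that you sum over all pairs of roots rather than fixing a single root of $f$, and that you spell out the irreducibility/separability of $f$ and the equal-spacing of its roots, which the paper takes for granted.
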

\begin{proof}
Fix $t \in K$ with $p(p-1)v(t) > pv(\Disc(f))-(p-1)^2v(f(0))$.
Let $x_1, \ldots, x_p$ denote the roots of $f$ in its splitting field $K'$, and note that these are pairwise distinct as $f$ is separable.
Let $K''$ be the splitting field of $f_t$ over $K'$. We denote by $v$ also the unique extension of $v$ to $K''$.
Since $f$ has a cyclic splitting field, we may assume that $x_2 = \sigma(x_1)$, $x_3 = \sigma(x_2)$, \ldots, $x_{p} = \sigma(x_{p-1})$ and $x_1 = \sigma(x_p)$ for some $K$-automorphism $\sigma$ of $K'$.
In particular, it follows that $v(x_i) = \frac{1}{p}v(f(0))$ and $v(x_1 - x_2) = v(x_i - x_j)$ for all $i, j \in \lbrace 1, \ldots, p \rbrace$ with $i \neq j$.
From this it follows that $v(\Disc(f)) = v(\prod_{1 \leq i < j \leq p} (x_i - x_j)^2) = p(p-1)v(x_1-x_2)$.

Now let $y_1, \ldots, y_p$ denote the roots of $f_t$ in $K''$.
We compute that
\begin{align*}
p \max_{1 \leq i \leq p} v(x_1 - y_i) &\geq \sum_{i=1}^p v(x_1 - y_i) = v(\prod_{i=1}^p (x_1 - y_i)) = v(f_t(x_1)) \\
&= v(f_t(x_1) - f(x_1)) = v(tx_1^{p-1}) = v(t) + \frac{p-1}{p}v(f(0)) \\
&> \frac{v(\Disc(f))}{p-1} = pv(x_1-x_2) = p \max_{2 \leq i \leq n} v(x_1 - x_i).
\end{align*}
By Krasner's Lemma \cite[Theorem 4.1.7]{Eng05} there exists $i \in \lbrace 1, \ldots, p \rbrace$ such that $K(y_i)$ contains $x_1$, and thus, since $K'/K$ is cyclic, contains $K'$.
\end{proof}
\begin{opm}
If in the setup of \Cref{L:Krasner} one has that $f$ is a separable irreducible polynomial modulo $\mf{m}_v$, then $\Disc(f)$ and $f(0)$ are units, and thus the condition on $t$ reduces to simply $v(t) > 0$.
\end{opm}

We include the following easy lemma on extensions of henselian valued fields for lack of a suitable reference.
For a finite field extension $L/K$, we denote by $\Tr_{L/K}$ and $N_{L/K}$ respectively the \emph{trace} and \emph{norm} maps $L \to K$.
\begin{lem}\label{L:norms-are-open}
Let $(L,w)/(K, v)$ be a finite separable extension of henselian valued fields.
The set $N_{L/K}(L^\times)$ is open with respect to the $v$-adic topology.
\end{lem}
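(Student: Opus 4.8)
The plan is to exploit that $N_{L/K}(L^\times)$ is a \emph{subgroup} of $K^\times$. Since the $v$-adic topology makes $K^\times$ into a topological group, a subgroup is open as soon as it contains a $v$-adic neighbourhood of $1$: each of its cosets is then open, being a translate of such a neighbourhood by a group element (translation being a homeomorphism). So it suffices to exhibit one $v$-adic open neighbourhood of $1$ contained in $N_{L/K}(L^\times)$. (If $v$ is trivial the statement is clear, so we may assume $vK \neq \{0\}$.)

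To produce such a neighbourhood I would pick $\delta$ in the integral closure $\mc{O}$ of $\mc{O}_v$ in $L$ — which coincides with $\mc{O}_w$ as $(K,v)$ is henselian — with $a := \Tr_{L/K}(\delta) \neq 0$. This is possible because $\mc{O}$ contains a $K$-basis of $L$ (clear denominators), while $\Tr_{L/K}\colon L \to K$ is surjective by separability of $L/K$, hence does not vanish on a $K$-spanning set. Since $\delta$ is integral over $\mc{O}_v$ and $\mc{O}_v$ is integrally closed in $K$, the characteristic polynomial $\chi_\delta(X) = X^n - \Tr_{L/K}(\delta)X^{n-1} + \cdots$ of multiplication by $\delta$ on $L$ (where $n = [L:K]$) lies in $\mc{O}_v[X]$; and a direct computation with the $K$-embeddings of $L$ into an algebraic closure gives the polynomial identity $N_{L/K}(1 + t\delta) = (-1)^n t^n \chi_\delta(-t^{-1})$, which expands as $g(t) := 1 + a t + (\text{terms of degree} \geq 2)$ with $g(t) \in \mc{O}_v[t]$, $g(0) = 1$, $g'(0) = a$.

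Now take any $y \in K$ with $v(y - 1) > 2v(a)$. As $v(a) \geq 0$ this forces $v(y-1) > 0$, hence $v(y) = 0$ (so $y \neq 0$) and $y - 1 \in \mc{O}_v$; thus $P(t) := g(t) - y$ lies in $\mc{O}_v[t]$ and satisfies $v(P(0)) = v(1-y) > 2v(a) = 2v(P'(0))$. By Newton's Lemma for the henselian valued field $(K,v)$ \cite[Theorem 4.1.3]{Eng05} there exists $t_0 \in \mc{O}_v$ with $g(t_0) = y$, i.e.\ $N_{L/K}(1 + t_0\delta) = y$; and $1 + t_0\delta \in L^\times$ since its norm $y$ is nonzero. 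Hence the basic $v$-adic open set $\{\, y \in K \mid v(y-1) > 2v(a) \,\}$ is contained in $N_{L/K}(L^\times)$, and we conclude by the first paragraph.

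Conceptually this is just an explicit inverse function theorem: the differential of $N_{L/K}$ at a point $\beta \in L^\times$, viewed as a $K$-linear map $L \to K$, is $\gamma \mapsto N_{L/K}(\beta)\,\Tr_{L/K}(\gamma\beta^{-1})$, which is surjective by separability, so $N_{L/K}$ is $v$-adically open near $\beta$; the computation above is the case $\beta = 1$ made fully concrete. The only point that needs care is keeping $g$ integral — hence the choice of $\delta \in \mc{O}_w$ rather than merely in $L$ — so that Hensel's Lemma applies in its clean form; everything else is a routine verification of the Newton-approximation hypothesis.
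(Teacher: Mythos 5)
Your proof is correct and follows essentially the same route as the paper: reduce to exhibiting a neighbourhood of $1$ inside the norm group, pick an integral element $\delta \in \mc{O}_w$ with $\Tr_{L/K}(\delta) \neq 0$, expand $N_{L/K}(1+t\delta)$ as a polynomial in $\mc{O}_v[t]$ with linear coefficient the trace, and apply Hensel/Newton to hit every $y$ with $v(y-1) > 2v(\Tr_{L/K}(\delta))$. Your extra care about why $\delta$ can be taken in $\mc{O}_w$ and the explicit characteristic-polynomial identity are fine but only make explicit what the paper leaves implicit.
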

\begin{proof}
Since $N_{L/K}$ is multiplicative, it suffices to show that $N_{L/K}(L^\times)$ contains an open neighbourhood of $1$.
Since $L/K$ is separable, $\Tr_{L/K}$ is not identically zero \cite[Theorem VI.5.2]{LangAlgebra}, so we may fix $\alpha \in \mc O_w$ such that $\Tr_{L/K}(\alpha) \neq 0$.
We claim that for all $x \in K$ with $v(x) > 2v(\Tr_{L/K}(\alpha))$ we have $1+x \in N_{L/K}(L^\times)$.
It remains to prove this claim.

For $\lambda \in K$ arbitrary, one computes (using definitions of norms and trace, see e.g.~\cite[Section VI.5]{LangAlgebra}) that
$$ N_{L/K}(1 + \alpha \lambda) = 1 + \lambda \Tr_{L/K}(\alpha) + \lambda^2 g(\alpha) $$
for some polynomial $g(T) \in \mc{O}_v[T]$.
If $x \in K$ is such that $v(x) > 2\Tr_{L/K}(\alpha)$, then by Hensel's Lemma the polynomial $T \Tr_{L/K}(\alpha) + T^2g(T) - x \in \mc{O}_v[T]$ has a simple root $\lambda$ in $K$, and then $1 + x = N(1 + \alpha \lambda)$ as desired.
\end{proof}

\section{Central simple algebras over function fields}

We shall now consider central simple algebras over fields.
Given a field $K$, a \emph{central simple algebra over $K$} is a finite-dimensional simple $K$-algebra $A$ with centre $K$.
The $K$-dimension of such an algebra is always a square; its square root is called the \emph{degree} of the algebra \cite[Section 2.2]{GilleSzamuely_2nd}.
When $L/K$ is an arbitrary field extension and $A$ a central simple $K$-algebra of degree $n$, then $A \otimes_L K$ is naturally a central simple $L$-algebra of degree $n$, which we denote by $A_L$.
Given a field extension $L/K$, we shall say that $A_L$ is \emph{split} (or that $A$ \emph{splits over} $L$) if $A_L$ is $L$-isomorphic to the ring $\mathbb M_n(L)$ of $(n \times n)$-matrices over $L$.
Otherwise, we say that $A_L$ is \emph{non-split} (or that $A$ is \emph{non-split over $L$}).
We refer the interested reader to \cite[Chapter 2]{GilleSzamuely_2nd} for a gentle introduction to the theory of central simple algebras over fields.

Given a central simple algebra $A$ of degree $n$ over $K$, its set of $n$-dimensional right ideals forms a smooth variety called the \emph{Severi-Brauer variety of $A$} \cite[Definition 1.16]{BOI}, which we denote by $\SB(A)$.
It has the property that, for any field extension $L/K$, $A_L$ is split if and only if $\SB(A)$ has an $L$-rational point \cite[Proposition 1.17]{BOI}.
The algebraic group of invertible elements of $A$, denoted $\GL_1(A)$, naturally acts on $\SB(A)$ via left multiplication.

\begin{thm}[Mehmeti]\label{T:LGP-Mehmeti}
Let $(K, v_K)$ be a complete rank one valued field, $F/K$ a function field in one variable, $A$ a central simple algebra over $F$.
Suppose that $A$ is split over $F_w$ for all valuations $w$ on $F$ for which $w\vert_K = v_K$. Then $A$ is split.
\end{thm}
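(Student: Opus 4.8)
The plan is to derive this from Mehmeti's local--global principle for patching over Berkovich curves, which extends the field-patching results of Harbater--Hartmann--Krashen over complete discretely valued fields to arbitrary complete rank one valued fields. First I would record two standard reductions. For any field extension $L/F$, the algebra $A$ splits over $L$ precisely when the Severi--Brauer variety $\SB(A)$ has an $L$-rational point; and $\SB(A)$ is a smooth projective $F$-variety on which the connected rational linear algebraic group $\GL_1(A)$ --- an open subvariety of the affine space underlying $A$ --- acts transitively. Moreover, because $\SB(A)$ is smooth, for a henselian valued field $(E, w)$ with completion $\widehat E$ one has $\SB(A)(E) \neq \emptyset$ if and only if $\SB(A)(\widehat E) \neq \emptyset$; so splitting over $F_w$ is equivalent to splitting over the completion of $F$ at $w$.

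Next I would set up the patching input. Choosing a normal projective model of $F$ over $\mc{O}_{v_K}$ and passing to the Berkovich analytification $X^{\mathrm{an}}$ over $K$, every point $x \in X^{\mathrm{an}}$ carries a complete residue field $\mc{H}(x)$, which (outside the trivial cases) is the completion of $F$ at a valuation $w_x$ on $F$ with $w_x\vert_K = v_K$. Mehmeti's patching theorem, applied to the homogeneous space $\SB(A)$ under the connected rational group $\GL_1(A)$, asserts that $A$ splits over $F$ as soon as it splits over $\mc{H}(x)$ for all $x$ in the relevant index set of points of $X^{\mathrm{an}}$. By hypothesis $A$ splits over $F_{w_x}$, hence over $\mc{H}(x)$, for each such $x$; therefore $A$ splits over $F$.

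The substantive content here is Mehmeti's theorem itself --- the construction of an appropriate covering of the Berkovich curve and the attendant factorisation lemma for $\GL_1(A)$-valued functions, carried out without any discreteness assumption on $v_K$ --- and I would use it as a black box; this is where all the real difficulty lies. The only work left on our side is bookkeeping: confirming that the overfields appearing in Mehmeti's principle all receive an $F$-embedding from $F_w$ for suitable valuations $w$ extending $v_K$ (so that the hypothesis, imposed over \emph{every} such valuation, certainly covers them), and invoking smoothness of $\SB(A)$ together with the implicit function theorem over henselian fields to pass freely between henselisations and completions. One should also check that $\GL_1(A)$ satisfies the rationality hypothesis under which the patching principle applies to homogeneous spaces, which is classical.
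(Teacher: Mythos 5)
Your overall route is the same as the paper's: reduce splitting of $A$ to the existence of a rational point on $\SB(A)$, note that $\GL_1(A)$ is a connected rational linear algebraic group acting (strongly) transitively on $\SB(A)$, and feed this homogeneous space into Mehmeti's Berkovich patching local--global principle, used as a black box. The gap lies in what you dismiss as bookkeeping. The index set of Mehmeti's principle (in the valuation-theoretic form the paper cites, Corollary 3.18(2) and Remark 3.20) is not exhausted by valuations extending $v_K$: it consists of the rank one valuations $w$ on $F$ with either $w\vert_K = v_K$ \emph{or} $w\vert_K$ trivial. The latter correspond to the closed points of the curve (the rigid points of the analytification), and your assertion that every non-trivial point of $X^{\mathrm{an}}$ yields the completion of $F$ at a valuation extending $v_K$ fails exactly there: at a rigid point the completed residue field is a finite extension of $K$, not an overfield of $F$, and the associated local condition is taken at the discrete valuation of $F$ centred at that closed point, which restricts trivially to $K$. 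The hypothesis of the theorem says nothing directly about these valuations, so your application of the local--global principle is incomplete as written.

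The paper closes this gap with a short but genuine argument: for a non-trivial valuation $w_0$ on $F$ trivial on $K$, the residue field $Fw_0$ is finite over the complete field $K$, hence henselian for the unique extension of $v_K$; consequently the henselisation $F_{w_0}$ coincides with the henselisation $F_w$ at the composite valuation $w$ of $w_0$ with that extension, and this composite satisfies $w\vert_K = v_K$. Note that $w$ has rank two, so it is essential that the theorem's hypothesis quantifies over \emph{all} valuations extending $v_K$, not only rank one ones --- no rank one valuation extending $v_K$ has henselisation containing $F_{w_0}$, so your proposed embedding of some $F_w$ with $w$ extending $v_K$ into the local fields at these points cannot be arranged within rank one. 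With this observation, splitness over all $F_w$ with $w\vert_K = v_K$ does imply splitness over $F_{w_0}$ for every non-trivial $w_0$ trivial on $K$, and Mehmeti's principle then applies. Your remaining points (passing between henselisation and completion via smoothness of $\SB(A)$, and the rationality and transitivity hypotheses for the $\GL_1(A)$-action) are fine and agree with the paper.
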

\begin{proof}
We first observe the following: if $w_0$ is a non-trivial valuation on $F$ which is trivial on $K$, then the residue field $Fw_0$ of $w_0$ is isomorphic (over $K$) to a finite extension of $K$, and is thus itself henselian with respect to the unique extension of $v_K$.
It follows that the henselisation of $w_0$ is naturally isomorphic to the henselisation of the valuation $w$ on $F$ obtained by composing $w_0$ with the unique extension of $v_K$ to $Fw_0$.
This valuation $w$ satisfies $w\vert_K = v_K$.
The hypotheses thus imply that $A$ is split over $F_{w_0}$ for all non-trivial valuations $w_0$ on $F$ which are trivial on $K$.

As explained in the beginning of section 5 of \autocite{HHK_ApplicationsPatchingToQuadrFormsAndCSAs}, the $F$-linear algebraic group $\GL_1(A)$ is a rational connected linear algebraic group which acts strongly transitively on the points of the Severi-Brauer variety $\SB(A)$ associated to $A$.

By Mehmeti's local-global-principle \autocite[Corollary 3.18(2) and Remark 3.20]{Mehmeti_PatchingBerkQuad} we have that the variety $\SB(A)$ has an $F$-rational point if and only if it has an $F_w$-rational point for each rank one valuation $w$ on $F$ for which either $w\vert_K = 0$ or $w\vert_K = v_K$.
Since the Severi-Brauer variety associated to a central simple algebra has a rational point if and only if the algebra is split, this (together with the observation from the first paragraph) concludes the proof.
\end{proof}

\begin{opm}
If $F$ is a global field (i.e.~a finite extension of $\qq$ or of $\ff_p(T)$ for some prime number $p$), the Albert-Brauer-Hasse-Noether Theorem states that a central simple $F$-algebra $A$ is split if and only if it is split over $F_w$ for all $\zz$-valuations $w$ on $F$ \cite[Theorem 8.1.17]{Neu15}.
In other words, $\SB(A)$ has an $F$-rational point if and only if it has an $F_w$-rational point for every $\zz$-valuation $w$ on $F$.
The Albert-Brauer-Hasse-Noether Theorem, and the closely related Hasse Norm Theorem \cite[Corollary VI.4.5]{Neu99}, are examples of \emph{local-global principles}.
They have been used extensively in the study of (existentially) definable subsets of global fields, see for example \cite{Rumely,Shlapentokh_Pacific00,EisShlap17} for applications of the Hasse Norm Theorem to obtain a.o.~definability of valuation rings in global fields, \cite{EisentragerThesis,Poo09,Par13,Koe16,Eis18,DaansGlobal} for the usage of quaternion algebras (i.e.~central simple algebras of degree $2$), and \cite{Dit17} for the usage of more general central simple algebras to obtain several natural existentially definable predicates in global fields.

\Cref{T:LGP-Mehmeti} gives a local-global principle for splitting of central simple algebras over function fields over complete valued fields. It is a specialisation of a much more general class of local-global principles using a technique called \emph{field patching}, going back to the foundational work of Harbater, Hartmann and Krashen \cite{HHK_ApplicationsPatchingToQuadrFormsAndCSAs}, and adapted to the setup of Berkovich geometry by Mehmeti \cite{Mehmeti_PatchingBerkQuad}. 
Local-global principles based on patching have been applied to study a.o.~properties of quadratic forms and central simple algebras over function fields over complete valued fields, see e.g.~\cite{CTPS12,BGvG14,Parimala_Preeti_Suresh_2018,DaansLinkage}.
In \cite{BDD} a related local-global principle for isotropy of quadratic forms was used to study existential definability of certain subrings of function fields over complete discretely valued fields.
Several of the arguments presented in this manuscript are a variation thereof.
\end{opm}

For a cyclic field extension $L/K$ of degree $m \in \nat$, a generator $\sigma$ of $\Gal(L/K)$ and an element $\pi \in F^\times$, the algebra $A = (L/K, \sigma, \pi)$ is defined to be the $m^2$-dimensional $K$-algebra generated by $L$ and an element $\alpha$ subject to the relations $x \alpha = \alpha \sigma(x)$ for $x \in L$ and $\alpha^m = \pi$.
It is a central simple $K$-algebra of degree $m$, and it is split if and only if $\pi$ is a norm of $L/K$ \cite[15.1, Lemma]{Pie82}.
Such an algebra is called a \emph{cyclic algebra}.

Consider a function field in one variable $F/K$.
We denote by $\mc{V}(F/K)$ the set of $\zz$-valuations on $F$ which are trivial on $F$.
If $A$ is a central simple $F$-algebra, we denote by $\Delta_K A$ the set of $w \in \mc{V}(F/K)$ for which $A$ is non-split over $F_w$.

When two valuations $v_1$ and $v_2$ on a field $F$ have the same associated valuation ring (i.e.~$\mc{O}_{v_1} = \mc{O}_{v_2}$), then we call $v_1$ and $v_2$ \emph{equivalent}, which we may denote by $v_1 \sim v_2$.
For a field $F$, let $\mathbb{V}(F)$ be the set of equivalence classes of valuations on $F$.
For a subset $S$ of $\mathbb{V}(F)$, we will abuse notation and write ``$v \in S$'' to mean ``$v$ is a valuation on $F$ whose equivalence class lies in $S$''.
We consider $\mathbb{V}(F)$ as a topological space with respect to the constructible topology.
By definition, the constructible topology is the coarsest topology on $\mathbb{V}(F)$ such that, for each $a \in F$, the set $ \{ v \in \mathbb{V}(F) \mid a \in \mf m_v \}$ is both open and closed.
The obtained topological space is a compact Hausdorff space, see e.g.~\cite[Section 1]{HuberKnebusch}.

The following two lemmata are variations of \cite[Lemma 8.1 and Lemma 8.3]{BDD} and are proven by completely identical arguments (with ``anisotropic quadratic form'' replaced by ``non-split central simple algebra'' each time).
\begin{lem}\label{L:Delta-compact}
Let $F$ be a field, $A$ a central simple algebra over $F$.
Then $\lbrace v \in \mathbb{V}(F) \mid A_{F_v} \text{ is non-split} \rbrace$ is a closed and compact subspace of $\mathbb{V}(F)$.
\end{lem}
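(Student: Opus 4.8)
The proof is the one used for \cite[Lemma 8.1]{BDD}, with the anisotropic quadratic form there replaced by the non-split central simple algebra $A$ and its associated quadric replaced by the (projective) Severi--Brauer variety $\SB(A)$; I sketch it. Since $\mathbb{V}(F)$ with the constructible topology is compact Hausdorff \cite[Section 1]{HuberKnebusch}, a subset is compact if and only if it is closed, so it suffices to show that $U := \{ v \in \mathbb{V}(F) \mid A_{F_v} \text{ is split} \}$ is open. By \cite[Proposition 1.17]{BOI}, for a field extension $L/F$ the algebra $A_L$ is split if and only if $\SB(A)(L) \neq \emptyset$, so $U = \{ v \mid \SB(A)(F_v) \neq \emptyset \}$. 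The plan is to show that each $v \in U$ has a neighbourhood contained in $U$ which is a finite intersection of sets of the form $\{ v' \mid x \in \mc O_{v'} \}$ and $\{ v' \mid x \in \mf m_{v'} \}$ for various $x \in F$; such sets are clopen and generate the constructible topology.

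\textbf{Reduction to a finite subextension.} Recall that the henselisation $F_v$ is a \emph{separable algebraic} extension of $F$ in which $F$ is dense (see e.g.\ \cite{Eng05}). Consequently, any $F_v$-rational point of $\SB(A)$ has its coordinates, in any affine chart defined over $F$, in a finite separable subextension $F \subseteq F' \subseteq F_v$, so $\SB(A)(F') \neq \emptyset$ and $A$ is split over $F'$. It thus suffices to prove that for every finite separable extension $F'/F$, the set $D(F'/F) := \{ v' \in \mathbb{V}(F) \mid F' \text{ admits an } F\text{-embedding into } F_{v'} \}$ is open: it contains the given $v$, and it is contained in $U$ because $F' \hookrightarrow F_{v'}$ forces $A_{F_{v'}}$, a base change of the split algebra $A_{F'}$, to be split.

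\textbf{Openness of $D(F'/F)$ --- the main point.} Write $F' = F(\alpha)$ and let $g \in F[X]$ be the monic minimal polynomial of $\alpha$; it is irreducible and separable, and one may assume $\deg g \geq 2$ since otherwise $D(F'/F) = \mathbb{V}(F)$. Fix $v \in D(F'/F)$, so $g$ has a root $\beta \in F_v$. Replacing $\alpha$ by $c\alpha$ for suitable $c \in F^\times$ --- which replaces $g$ by $c^{\deg g} g(X/c)$ and leaves $F'$ unchanged --- one may assume $g \in \mc O_v[X]$ and $\beta \in \mc O_{F_v}$. Since $g'(\beta) \neq 0$ by separability and $F$ is dense in $F_v$, one can choose $\gamma \in F$ with $v(\gamma - \beta)$ large enough that $v(g'(\gamma)) = v(g'(\beta)) < \infty$ and $v(g(\gamma)) > 2v(g'(\gamma))$; then $a := g(\gamma)$ and $b := g'(\gamma)$ are nonzero elements of $F$. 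Consider
\[ W := \{ v' \in \mathbb{V}(F) \mid g \in \mc O_{v'}[X], \ \gamma \in \mc O_{v'}, \ ab^{-2} \in \mf m_{v'} \}, \]
a finite intersection of clopen sets of the above form, hence clopen, and containing $v$. For $v' \in W$ one has $g \in \mc O_{v'}[X]$ monic, $\gamma \in \mc O_{v'}$ and $v'(g(\gamma)) > 2v'(g'(\gamma))$, so Hensel's lemma in the henselian field $F_{v'}$ produces a root $\beta'$ of $g$ in $F_{v'}$; as $g$ is irreducible over $F$, this yields an $F$-embedding $F' \cong F(\beta') \hookrightarrow F_{v'}$, i.e.\ $v' \in D(F'/F)$. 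Hence $W \subseteq D(F'/F)$, so $D(F'/F)$ is open. Therefore $U$ is open, $\{ v \mid A_{F_v} \text{ is non-split} \}$ is closed, and it is compact, being a closed subspace of the compact space $\mathbb{V}(F)$.

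The substantive step is the openness of $D(F'/F)$: there one must convert the a priori infinitary condition ``$F'$ embeds into the henselisation of $v'$'' into a constraint on the valuation of finitely many elements of $F$. This is a Krasner-type argument, and it is exactly where henselianity (through Hensel's lemma), separability of $F_v/F$, and density of $F$ in $F_v$ are used; smoothness of $\SB(A)$ plays no role. The first reduction and the topological bookkeeping are routine.
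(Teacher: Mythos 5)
The skeleton of your argument (pass to a finite subextension of the henselisation, cut out a clopen neighbourhood by a Hensel--Newton condition on finitely many elements of $F$, then use that $\mathbb{V}(F)$ is compact Hausdorff) is the right one and matches the intended adaptation of \cite[Lemma 8.1]{BDD}. But the step you yourself single out as the substantive one contains a genuine error: it is \emph{not} true in general that $F$ is dense in its henselisation $F_v$. This holds for valuations of rank one (where $F_v$ embeds into the completion), but fails for higher rank. For example, let $F = \qq(t)$ and let $v$ be the composition of the $t$-adic valuation with the $p$-adic valuation on the residue field $\qq$ ($p$ odd), so $vF = \zz \times \zz$ ordered lexicographically with $v(t) = (1,0)$, $v(p)=(0,1)$ and $Fv = \ff_p$. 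Since $X^2-(1+p)$ reduces to $X^2-1$, which has simple roots in $\ff_p$, Hensel's lemma gives $\beta = \sqrt{1+p} \in F_v$; yet any $\gamma \in \qq(t)$ with $v(\gamma - \beta) \geq v(t)$ would have $t$-adic residue $\sqrt{1+p} \in \qq$, which is absurd, so $v(\gamma-\beta) < (1,0)$ for all $\gamma \in F$. Such rank-two valuations are not a pathology you may exclude: they are exactly the valuations (composites of $K$-trivial $\zz$-valuations with a rank-one valuation on $K$) to which this lemma is applied in \Cref{P:mainProp} and \Cref{T:LGP-Mehmeti}. With density gone, your construction of $\gamma$ with $v(g(\gamma)) > 2v(g'(\gamma))$ for the minimal polynomial $g$ of a primitive element of an \emph{arbitrary} finite subextension $F' \subseteq F_v$ is unjustified: the needed approximation quality is $v(\gamma-\beta) > 2v(g'(\beta))$, and $v(g'(\beta))$ may lie in a coarser archimedean class than any distance achievable from $F$.

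The standard way to repair the argument is not to take an arbitrary primitive element, but to use the description of $\mc{O}_{F_v}$ as a filtered colimit of (standard) \'etale local $\mc{O}_v$-algebras: after scaling, the coordinates of the $F_v$-point of $\SB(A)$ lie in $F(\theta)$, where $\theta$ is a root of a monic $h \in \mc{O}_v[X]$ whose reduction has a \emph{simple} root $\bar\gamma \in Fv$ lifting to a $\gamma \in \mc{O}_v$ with $\theta$ the corresponding Hensel root. Writing $g_\theta$ for the minimal polynomial of $\theta$ over $F$ (a monic factor of $h$, hence in $\mc{O}_v[X]$ by Gauss), the clopen conditions ``coefficients of $g_\theta$ and $\gamma$ lie in $\mc{O}_{v'}$, $g_\theta(\gamma) \in \mf{m}_{v'}$, $g_\theta'(\gamma) \in \mc{O}_{v'}^\times$'' hold at $v$ and force a root of $g_\theta$ in $F_{v'}$ for every $v'$ in the neighbourhood, so no approximation beyond lifting a residue root is needed. (Alternatively, one can avoid explicit Hensel witnesses altogether: splitting of $A$ over a field is an existential condition with parameters in $F$, and an ultraproduct/\L{}o\'s argument using that an ultraproduct of the $F_{v'}$ is henselian and contains the henselisation of the limit valuation shows directly that the split locus is open.) In the rank-one case your density argument is correct; as written for all of $\mathbb{V}(F)$, it is not.
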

\begin{lem}\label{L:coarsen-rank-1}
Let $F/K$ be a function field in one variable, $v \in \mathbb{V}(F)$ such that $v\vert_K$ is a rank one valuation, and $d \in K^\times \cap \mf m_v$.
Let $A$ be a central simple algebra over $F$ such that $A_{F_v}$ is non-split.
Then either $\mc{O}_v[d^{-1}] = F$ or $\mc{O}_v[d^{-1}] = \mc{O}_w$ for some $w \in \Delta_K A$.
\end{lem}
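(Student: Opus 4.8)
The plan is to describe the overring $\mc{O}_v[d^{-1}]$ of $\mc{O}_v$ explicitly as (the valuation ring of) a coarsening of $v$, identify that coarsening, and then transfer non-splitness of $A$ along an inclusion of henselisations. I would start from the elementary fact that every ring strictly between a valuation ring and its field of fractions is again a valuation ring: concretely,
$$ \mc{O}_v[d^{-1}] \;=\; \{\, x \in F \mid v(x) \geq -n\, v(d) \text{ for some } n \in \nat \,\}, $$
which is the valuation ring $\mc{O}_w$ of the coarsening $w$ of $v$ whose value group is $vF/H$, where $H$ is the convex hull of $\zz\, v(d)$ in $vF$ --- unless $H = vF$, in which case $\mc{O}_v[d^{-1}] = F$ and we are in the first alternative of the conclusion. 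So from here on I assume $\mc{O}_v[d^{-1}] = \mc{O}_w \neq F$, i.e.\ $w$ is non-trivial, and it remains to show $w \in \Delta_K A$.

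Next I would check that $w$ is trivial on $K$ and a $\zz$-valuation. Since $d \in K^\times \cap \mf m_v$, we have $v(d) > 0$ with $v(d) \in (v\vert_K)K$. The group $H \cap (v\vert_K)K$ is a convex subgroup of $(v\vert_K)K$ containing the nonzero element $v(d)$; as $v\vert_K$ has rank one, its value group has no proper non-trivial convex subgroup, so $(v\vert_K)K \subseteq H$. Hence the value group of $w\vert_K$, being the image of $(v\vert_K)K$ in $vF/H$, is trivial, so $K \subseteq \mc{O}_w$ and $w\vert_K$ is trivial. A non-trivial valuation of a function field in one variable that restricts trivially to the base field is necessarily a $\zz$-valuation (a standard fact: by Abhyankar's inequality its value group is a finitely generated torsion-free abelian group of rational rank one, hence $\cong \zz$). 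Therefore $w \in \mc{V}(F/K)$.

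It then remains to show that $A$ is non-split over $F_w$, which is the heart of the matter. Since $w$ is a coarsening of $v$, the corresponding coarsening of the canonical henselian valuation on $F_v$ by $H$ is again henselian \cite{Eng05} and restricts to $w$ on $F$; by the universal property of the henselisation $(F_w, w)$ this yields an $F$-embedding $F_w \hookrightarrow F_v$. Consequently $A_{F_v} \cong A_{F_w} \otimes_{F_w} F_v$, so if $A_{F_w}$ were split (isomorphic to a matrix algebra) then $A_{F_v}$ would be split as well, contradicting the hypothesis. Hence $A_{F_w}$ is non-split, so $w \in \Delta_K A$, completing the proof.

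The main obstacle is this last step, and within it the input that a coarsening of a henselian valued field is henselian (equivalently, that $F_w$ embeds into $F_v$ over $F$); once that is available, the splitting conclusion is purely formal, using only that a central simple algebra which splits over a field stays split over every field extension. All the remaining steps are routine manipulations with convex subgroups of value groups. I note that this is exactly the argument of \cite[Lemma~8.3]{BDD} with ``anisotropic quadratic form'' systematically replaced by ``non-split central simple algebra'', since isotropy of a quadratic form --- like splitting of a central simple algebra --- is preserved under field extension.
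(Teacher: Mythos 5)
Your proof is correct and follows essentially the same route as the paper, which simply defers to \cite[Lemma 8.3]{BDD} and notes that the argument carries over verbatim with ``anisotropic quadratic form'' replaced by ``non-split central simple algebra''; your reconstruction (identify $\mc{O}_v[d^{-1}]$ as the coarsening of $v$ by the convex hull of $\zz v(d)$, use rank one of $v\vert_K$ to see the coarsening is trivial on $K$ and hence a $\zz$-valuation, then embed $F_w$ into $F_v$ via henselianity of coarsenings and transfer non-splitness along the field extension) is exactly that argument. No gaps.
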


\section{Existential definability in function fields}\label{sect:definability}
We will find it convenient to phrase our results in the set-up of first-order logic.
We shall work with the signature of rings $\Lar$, consisting of three binary function symbols $+, -, \cdot$ as well as two constant symbols $0, 1$. We will consider fields as $\Lar$-structures by interpreting these symbols in the obvious way.
When $F$ is a field and $R_0$ a subset of $F$, we will also consider $\Lar(R_0)$, which is the signature obtained by adjoining to $\Lar$ a constant symbol $c_r$ for each $r \in R_0$.
We can then naturally consider $F$ as an $\Lar(R_0)$-structure by interpreting $c_r$ as $r$.

Given a field $F$ and $n \in \nat$, a subset of $F^n$ will be called \emph{existentially $\Lar$-definable} (or $\exists$-$\Lar$-definable for short) if it is definable by an $\Lar$-formula of the form $\exists y_1, \ldots, y_m \psi$ where $\psi$ is a quantifier-free formula.
Similarly, we define existentially $\Lar(R_0)$-definable ($\exists$-$\Lar(R_0)$-definable) subsets of $F^n$ for a subset $R_0 \subseteq F$.
This definition is compatible with the one alluded to in the Introduction: indeed, if $R_0$ is a subring of $F$, then a subset $S$ of $F^n$ is $\exists$-$\Lar$-definable (respectively $\exists$-$\Lar(R_0)$-definable) if and only if there exist $k, m \in \nat$ and $f_1, \ldots, f_k \in \zz[X_1, \ldots, X_n, Y_1, \ldots, Y_m]$ (respectively $\in R_0[X_1, \ldots, X_n, Y_1, \ldots, Y_m]$) such that
$$ S = \{ x \in F^n \mid \exists y \in F^m : f_1(x, y) = \ldots = f_k(x,y) = 0 \}.$$
See for example \cite[Remark 3.4]{DDF} for a discussion of the (essentially well-known) equivalence of these two definitions.
Clearly, a finite intersection or finite union of $\exists$-$\Lar$-definable ($\exists$-$\Lar(R_0)$-definable) subsets of $F^n$ is again an $\exists$-$\Lar$-definable ($\exists$-$\Lar(R_0)$-definable) subset of $F^n$.

When $\varphi$ is an $\Lar$-formula ($\Lar(R_0)$-formula) and $x_1, \ldots, x_n$ are distinct variable symbols such that no variable other than $x_1, \ldots, x_n$ appears freely in $\varphi$, we might introduce the formula $\varphi$ as $\varphi(x_1, \ldots, x_n)$ to convey this information in a compact way.
When $F$ is an $\Lar$-structure ($\Lar(R_0)$-structure) we may then write $\varphi(F)$ to mean the subset $ \lbrace (a_1, \ldots, a_n) \in F^n \mid F \models \varphi (a_1, \ldots, a_n) \rbrace$; if $F$ is a field and $\varphi$ is an existential $\Lar$-formula (existential $\Lar(R_0)$-formula), then $\varphi(F)$ is by definition $\exists$-$\Lar$-definable ($\exists$-$\Lar(R_0)$-definable).

\begin{prop}\label{P:Koenigsmann-defining-constants}
Let $g \in \nat$.
There is an existential $\Lar$-formula $\varphi_g(x)$ such that, for every field $K$ and every function field in one variable $F/K$ of genus at most $g$, one has $\varphi_g(F) \subseteq K$, and furthermore equality holds when $K$ is large.
\end{prop}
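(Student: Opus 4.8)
The plan is to produce a single existential $\Lar$-formula that "says" $x$ lies in the base field, using the fact that over a function field of bounded genus one can recognize constants by a uniform geometric/arithmetic condition, and that largeness guarantees the condition is actually satisfied by every constant. The cleanest route is via the \emph{degree} of the divisor of $x - c$ for varying $c$, or equivalently via the observation that a nonconstant $x \in F$ realizes $F$ as a finite extension of $K(x)$ of degree $[F : K(x)] \le g + 1$ (this bound comes from the genus: if $x$ has a pole of order $n$ at a single place then Riemann--Roch forces $n \le g+1$ unless $x$ is constant, and in general one reduces to this case) — no, more robustly: $x$ is constant if and only if $F/K(x)$ is not a finite extension, equivalently $K(x) = K$. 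So the first step is to find an existential condition on $x \in F$ equivalent to "$x - c$ is a unit at every place of $F$ for the relevant $c$," i.e.\ to $x$ having no zeros and no poles; such an $x$ generates only constants over $K$ when $F$ has a $K$-rational place, and one handles the general bounded-genus case by a uniform argument. I would follow Koenigsmann's method (as the proposition name suggests): Koenigsmann's trick is that $x \in K$ iff both $x$ and, say, $x + 1$ (or enough translates) avoid having poles, which over a curve one can detect by solvability of norm equations or by realizing "no pole" through an explicit existential sentence valid uniformly in genus $\le g$.

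Concretely, first I would fix a uniform bound $N = N(g)$ (something like $N = 2g+1$ or $N = g+1$) with the property: for every field $K$, every function field $F/K$ of genus $\le g$, and every $x \in F \setminus K$, the divisor of poles of $x$ has degree $\ge 1$ and in fact $x$ generates a subfield $K(x) \subseteq F$ with $[F:K(x)] \le N$ whenever $x$ has a single simple pole; then the general statement follows because a nonconstant $x$ always has \emph{some} pole. Second, I would write down, using this $N$, an existential $\Lar$-formula $\psi_g(x)$ expressing "$x$ takes at most $N$ distinct values" in a first-order-friendly way — e.g.\ by asserting the existence of a monic polynomial relation $x^N + a_{N-1} x^{N-1} + \cdots + a_0 = 0$ whose coefficients $a_i$ are themselves constants, combined with an auxiliary gadget that forces the $a_i$ into $K$. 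The recursion here is the subtle point: defining constants seems to require already knowing which elements are constants. Koenigsmann breaks the circularity by noting that the $a_i$ can be taken to be \emph{values of $x$}, or by using that "being integral over the prime field inside every residue field" is an absolute, parameter-free condition. So the actual formula will encode: $\exists y_1, \dots, y_m$ such that $x$ satisfies a monic equation of degree $\le N$ over the subring generated by products and sums involving $y_j$ and $1$, together with finitely many equations forcing solvability — and one checks by Riemann--Roch plus largeness that this is satisfiable exactly for $x \in K$.

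The containment $\varphi_g(F) \subseteq K$ for \emph{arbitrary} $K$ should follow purely from Riemann--Roch: if $x \notin K$ then $[F:K(x)] \ge 1$ and one shows no monic equation of the prescribed bounded degree with the prescribed coefficients can hold, because the pole divisor of $x$ would have degree exceeding $N$, contradicting the genus bound built into $N(g)$. The reverse inclusion, equality when $K$ is large, is where largeness enters: given $c \in K$, one must exhibit the existential witnesses $y_1, \dots, y_m$, and these are $K$-rational (or $F$-rational) points on an auxiliary curve or variety over $K$ (or over a residue field, itself a finite extension of $K$, hence still large), which exists because $K$ is large and the variety in question is geometrically integral with a smooth point. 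So the structure is: (i) combinatorial genus bound $N(g)$; (ii) construction of $\psi_g$ as an explicit existential formula; (iii) the "$\subseteq$" direction by Riemann--Roch/divisor-degree counting; (iv) the "$=$" direction for large $K$ by producing rational points via largeness, using that every residue field of $F/K$ at a $\zz$-valuation trivial on $K$ is a finite extension of $K$ (as noted in the proof of \Cref{T:LGP-Mehmeti}) and that finite extensions of large fields are large.

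The main obstacle I anticipate is step (ii): making the formula genuinely \emph{existential} and \emph{uniform in $g$} without any parameters, i.e.\ circumventing the apparent circularity that recognizing constants needs constants as parameters. I expect the resolution to be Koenigsmann-style: replace "$a_i \in K$" by "$a_i$ is algebraic over the prime field of bounded degree depending only on... " — no, that fails for large transcendental $K$ — rather by "$a_i$ arises as a value $x(\mathfrak p)$ of $x$ at a place, hence automatically algebraic over $K(x)$, hence in $K$ when $x$ is being forced constant," turning the constraint into a consistency condition expressible existentially via resultants. Once the formula is pinned down, steps (iii) and (iv) are standard Riemann--Roch and largeness arguments respectively, and I would not expect difficulty there beyond bookkeeping of the genus constant.
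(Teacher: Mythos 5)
There is a genuine gap, and in fact the core mechanism of your proposal does not work. Your plan rests on two pillars: (i) a bound $N(g)$ such that every nonconstant $x\in F$ satisfies $[F:K(x)]\le N(g)$, so that the inclusion $\varphi_g(F)\subseteq K$ can be obtained by divisor-degree counting against a monic relation of degree $\le N(g)$; and (ii) a way of forcing the coefficients of that relation to be constants without circularity. Pillar (i) is false: for nonconstant $x$ one has $[F:K(x)]=\deg(\text{pole divisor of }x)$, which is unbounded independently of the genus (already in $K(T)$, $[K(T):K(T^n)]=n$), so no $N(g)$ with the property you want exists, and your hedged Riemann--Roch reduction (``unless $x$ is constant\dots a nonconstant $x$ always has some pole'') does not repair this. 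Pillar (ii) is the circularity you yourself flag, and none of the proposed escapes is viable as stated: ``algebraic of bounded degree over the prime field'' fails for transcendental constants (you concede this), ``$a_i$ arises as a value of $x$ at a place'' is not a parameter-free existential $\Lar$-condition (detecting places/residues is precisely the kind of definability one does not yet have), and invoking resultants does not change that. So neither the inclusion $\varphi_g(F)\subseteq K$ for arbitrary $K$ nor the equality for large $K$ is actually established, and no concrete formula is ever produced. A further slip: ``$x$ has no zeros and no poles'' is a universal condition over all places; converting it to an existential one is the whole difficulty, not an available step.

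The paper's proof uses a different mechanism, which is exactly the missing idea. It cites Koenigsmann \cite[Theorem 2]{Koe02} and inspects his construction: one fixes an auxiliary plane curve with integral coefficients, a rational point over the prime field, and genus \emph{strictly larger} than $g$. Because the auxiliary curve has larger genus than $F$, every $F$-rational point of it is a constant point (Riemann--Hurwitz), and this is what makes a purely existential formula give $\varphi_g(F)\subseteq K$ for \emph{every} base field; largeness is needed only for the reverse inclusion, since it guarantees infinitely many $K$-rational points on the auxiliary curve, enough to witness every element of $K$. Uniformity in $g$ is then obtained by taking the explicit curves of \cite[Example 9.5]{BDD} (one per characteristic case) and a finite case distinction. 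Your proposal mentions an ``auxiliary curve'' only as a source of witnesses under largeness, but in the actual argument the high-genus auxiliary curve is the definition's engine, replacing entirely the bounded-degree-relation scheme on which your write-up depends.
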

\begin{proof}
By \cite[Theorem 2]{Koe02} there exists for every function field in one variable $F$ over a large field $K$ an existential $\Lar$-formula $\varphi_{F}(x)$ such that $\varphi_{F}(F) = K$.
Inspection of the proof readily reveals that the largeness assumption is only needed for the inclusion $\varphi_F(F) \supseteq K$, and that such a formula $\varphi_F$ can be chosen to depend on $F$ only via the equation of a plane curve with integral coefficients, with a rational point in the prime field of $F$, and with genus strictly higher than that of $F$.
Letting $g$ be an upper bound for the genus of $F$, as explained in \cite[Example 9.5]{BDD}, one may take the plane curve with equation
\begin{itemize}
\item $Y^2 - Y = X^{2 g + 3}$ if $\car(F) = 2$,
\item $Y^2 = X^{2g+3} - X + 1$ if $\car(F)$ divides $2g + 3$,
\item $Y^2 = -X^{2g+3} + 1$ if $\car(F)$ does not divide $2(2g+3)$.
\end{itemize}
By making a finite case distinction on the characteristic of $F$, we may thus find a single existential $\Lar$-formula $\varphi_g$ which has the desired property for all function fields in one variable $F/K$ of genus at most $g$.
\end{proof}
\begin{prop}\label{P:mainProp}
Let $F/K$ be a function field in one variable, $f \in F[X]$ monic of prime degree $p$ such that its splitting field $F'$ is a cyclic extension of degree $p$ of $F$.
Let $A$ be a central simple $F$-algebra of degree $p$ such that $A_{F'}$ is split and, for all $w \in \Delta_K A$, we have $f \in \mc{O}_w[X]$. 
For $t \in F$, define $f_t(X) = f(X) + tX^{p-1} \in F[X]$.
Finally, let $M \in \nat$ be such that $M > \max_{w \in \Delta_K A} w(\Disc(f))$.
For $t \in F$ we have
$$ t \in \bigcap_{w \in \Delta_K A} \mc{O}_w \enspace\Leftarrow\enspace \exists a \in K^\times : A \text{ is split over the splitting field of } f_{at^M\Disc(f)}, $$
and the other implication holds when $K$ is large.
\end{prop}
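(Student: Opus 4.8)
The plan is to prove the two implications separately. The backward implication holds unconditionally and rests on a Newton polygon argument; the forward implication is where largeness of $K$ enters, and it is the harder half. For the backward implication, suppose some $a \in K^\times$ makes $A$ split over the splitting field $L$ of $g := f_{at^M\Disc(f)}$, and argue by contraposition. Assume $t \notin \mc{O}_{w_0}$ for some $w_0 \in \Delta_K A$, so $w_0(t) \leq -1$. Since $w_0$ is trivial on $K$ and $f \in \mc{O}_{w_0}[X]$, we have $0 \leq w_0(\Disc(f)) \leq M - 1$, whence $w_0(at^M\Disc(f)) = Mw_0(t) + w_0(\Disc(f)) \leq -1 < 0$. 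Thus in $g = f + at^M\Disc(f)X^{p-1}$ the coefficient of $X^{p-1}$ has strictly negative $w_0$-value while all other coefficients lie in $\mc{O}_{w_0}$; the Newton polygon of $g$ over the henselisation $F_{w_0}$ therefore has $(p-1, w_0(at^M\Disc(f)))$ as its unique lowest vertex, so $g$ factors over $F_{w_0}$ as a linear polynomial (with root in $F_{w_0}$) times a polynomial of degree $p-1$. Fixing an $F$-embedding $L \hookrightarrow \overline{F_{w_0}}$, the field $L \cdot F_{w_0}$ is the splitting field over $F_{w_0}$ of a polynomial of degree $p-1$, so $[L\cdot F_{w_0} : F_{w_0}]$ is a product of integers less than $p$, in particular coprime to $p$. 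But $A$ is non-split of prime degree $p$ over $F_{w_0}$, hence of index $p$ there, and so is split by no finite extension of $F_{w_0}$ of degree coprime to $p$; since $A_L$ being split would make $A$ split over $L \cdot F_{w_0}$, this is a contradiction.

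\emph{Forward implication.} Assume $K$ is large and $t \in \bigcap_{w\in\Delta_K A}\mc{O}_w$. Being monic of prime degree $p$ with a splitting field of degree $p$ over $F$, the polynomial $f$ is irreducible over $F$, and any root $x$ of $f$ generates $F'$. As $F'/F$ is Galois, it suffices to find $a \in K^\times$ for which $f_{at^M\Disc(f)}$ has a root in $F' \setminus F$: such a root then generates $F'$, its $\Gal(F'/F)$-conjugates are the remaining roots of $f_{at^M\Disc(f)}$, so the splitting field of $f_{at^M\Disc(f)}$ is $F'$, over which $A$ is split by hypothesis. Writing a prospective root in the power basis $1, x, \ldots, x^{p-1}$ of $F'/F$ and comparing coordinates, this reduces to finding an $F$-rational point $(a, c_0, \ldots, c_{p-1})$ with $a \in K^\times$ and $(c_1, \ldots, c_{p-1}) \neq 0$ on a certain affine $F$-variety $Z$ which maps to the $a$-line. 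To produce such a point one first collects local data. For $w \in \Delta_K A$: since $F'$ splits the non-split algebra $A_{F_w}$ and $p$ is prime, $F'\otimes_F F_w$ is a field of degree $p$ over $F_w$; when $w(t) \geq 1$, or when $p$ is odd and $w(\Disc(f)) \geq 1$, the assumption $M > w(\Disc(f))$ guarantees $p(p-1)w(at^M\Disc(f)) > pw(\Disc(f)) - (p-1)^2 w(f(0))$ for every $w$-unit $a$, so \Cref{L:Krasner} applies and (exactly as in its proof) yields a root of $f_{at^M\Disc(f)}$ generating $F'\otimes_F F_w$ over $F_w$, a local solution at $w$ valid for every such $a$. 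At the remaining places of $\Delta_K A$, namely those at which $t$ and $\Disc(f)$ are both units, one checks that $\overline{f}$ is irreducible and separable modulo $\mf{m}_w$ and that $F'\otimes_F F_w$ is the unramified degree-$p$ extension splitting the residue division algebra; here $\overline{f_{at^M\Disc(f)}}$ genuinely depends on $a$, and the residue of $a$ (an element of $K$) must be chosen so that $\overline{f_{at^M\Disc(f)}}$ acquires a root generating the residue extension, which then lifts by Hensel's Lemma. Finally, largeness of $K$ — via the fact that a large field is existentially closed in its Laurent series field (equivalently, every curve over it with a smooth rational point has infinitely many rational points), together with the compactness of $\Delta_K A$ in the constructible topology — is invoked to realise a single $a \in K^\times$ satisfying all of these local requirements simultaneously, producing the required point of $Z$.

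\emph{Main obstacle.} The hard part should be exactly this last step of the forward implication: handling the places of $\Delta_K A$ at which $t$ and $\Disc(f)$ are units, where the Krasner perturbation gives nothing for free and the residue of $a$ must be chosen carefully, and then passing from these local choices to an honest element $a \in K^\times$. This is where largeness is indispensable, and it is delicate because $\Delta_K A$ need not be finite and $K$ need not carry any nontrivial valuation.
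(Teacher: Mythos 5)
Your backward implication is correct and is essentially the paper's own argument: the perturbation term has negative value at a hypothetical bad place, henselianity gives a linear factor, the remaining splitting field has degree prime to $p$ over $F_{w_0}$, and a non-split algebra of prime degree $p$ over a henselian field cannot be split by such an extension.

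The forward implication, however, has a genuine gap, and it starts at the very first reduction. You propose to find $a \in K^\times$ such that $f_{at^M\Disc(f)}$ has a root in $F' \setminus F$, so that its splitting field is exactly $F'$. This is a sufficient condition, but it is far stronger than what the proposition asserts, and it is in general unachievable, so no amount of local analysis can rescue the plan. Already for $p=2$, $F = K(T)$ with $\car(K) \neq 2$, $f = X^2 - T$ and $t = 1$, the polynomial $f_{a\Disc(f)} = X^2 + 4aTX - T$ has splitting field $F\bigl(\sqrt{T(1+4a^2T)}\bigr)$, and this equals $F(\sqrt{T})$ only if $1+4a^2T$ is a square in $K(T)$ --- which fails for every $a \in K^\times$. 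The proposition is nevertheless true in such situations precisely because the algebra $A$ can become split over the \emph{new} splitting field without that field containing (or equalling) $F'$; the paper never attempts to control the splitting field of the perturbed polynomial, and this is the whole point of invoking \Cref{T:LGP-Mehmeti}. Your second problem is the final glue step: ``largeness of $K$ together with compactness of $\Delta_K A$ realises a single $a \in K^\times$ satisfying all local requirements'' is not an argument. Every $w \in \Delta_K A$ is trivial on $K$, so any $a \in K^\times$ is a $w$-unit; at the places where $t$, $\Disc(f)$ and $f(0)$ are all units, the inequality needed for \Cref{L:Krasner} fails for \emph{every} such $a$, and you offer no mechanism (and none exists for your target, by the example above) to convert residue conditions at infinitely many places into one global $a$ yielding a root in $F'$. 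The paper's actual route is different: largeness is used only through $K \prec_\exists K(\!(s)\!)$ and $F \prec_\exists F K(\!(s)\!)$, combined with the existential definition of the constant field from \Cref{P:Koenigsmann-defining-constants}, to reduce to the case where $K$ is complete discretely valued; there one takes $a = \tau^n$ a power of the uniformiser of $K$, which has \emph{positive} value at every valuation of $F$ extending $v_K$, so that a compactness argument in $\mathbb{V}(F)$ (over the set of such extensions where $A$ is non-split, not over $\Delta_K A$) makes Krasner's condition hold at all of them simultaneously; Mehmeti's patching local--global principle then gives splitting of $A$ over whatever the splitting field of $f_{\tau^n t^M \Disc(f)}$ turns out to be. Your proposal is missing both this base-change mechanism and the local--global principle, and its replacement goal is unattainable.
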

\begin{proof}
Consider $t \in F$.

Assume first that $a \in K^\times$ is such that $A$ is split over the splitting field of $f_{at^M\Disc(f)}$.
Consider an arbitrary valuation $w \in \Delta_K A$; note that $w(at) = w(t)$.
Suppose for the sake of a contradiction that $w(t) < 0$.
Then also $w(at^M\Disc(f)) = M w(t) + w(\Disc(f)) < 0$ in view of the choice of $M$.
Since $f \in \mc{O}_w[X]$, it follows that $f_{at^M\Disc(f)}$ has a root over $F_w$ by henselianity.
But by assumption $A_{F_w}$ is non-split, whereby it cannot become split over the splitting field of $f_{at^M\Disc(f)}$ (since this splitting field has a degree over $F_w$ which is coprime to $p$, the degree of $A_{F_w}$).
This contradicts the choice of $a$. We infer that $w(t) \geq 0$.
This concludes the proof of the implication from right to left.

For the other implication, first consider the special case where $K$ is complete with respect to a $\zz$-valuation $v_K$.
We consider the set
$$ \Delta = \lbrace v \in \mathbb{V}(F) \mid v\vert_K \sim v_K \text{ and } A_{F_v} \text{ is non-split}\rbrace.$$
This is a closed and hence compact subspace of $\mathbb{V}(F)$:
the closedness of the condition that $A_{F_v}$ is non-split is given by \Cref{L:Delta-compact}, and the condition $v\vert_K \sim v_K$ corresponds to the closed set
$$ \bigcap_{b \in \mc{O}_{v_K}} \{ v \in \mathbb{V}(F) \mid b \in \mc{O}_v \} \cap \{ v \in \mathbb{V}(F) \mid \tau \in \mf{m}_v \} $$
where $\tau$ is any fixed non-zero element of $\mf{m}_{v_K}$.
By \Cref{L:coarsen-rank-1} we have that, for each $v \in \Delta$, either $\mc{O}_v[\tau^{-1}] = F$ or $\mc{O}_v[\tau^{-1}] = \mc{O}_w$ for some $w \in \Delta_K A$.
By \cite[Lemma 8.2]{BDD} we thus have
$$ \bigcap_{w \in \Delta_K A} \mc{O}_w = \bigcap_{v \in \Delta} \mc{O}_v[\tau^{-1}] = \bigcup_{n \in \nat} \tau^{-n} \bigcap_{v \in \Delta} \mf{m}_v.$$
We have that $f(0), \Disc(f) \in \bigcap_{w \in \Delta_K A} \mc{O}_w$ by assumption, and thus the hypothesis $t \in \bigcap_{w \in \Delta_K A} \mc{O}_w$ implies that also $\frac{(t^M \Disc(f))^{p(p-1)}f(0)^{(p-1)^2}}{\Disc(f)^p} \in \bigcap_{w \in \Delta_K A} \mc{O}_w$, which by the above yields the existence of $n \in \nat$ for which $\frac{(\tau^n t^M \Disc(f))^{p(p-1)}f(0)^{(p-1)^2}}{\Disc(f)^p} \in \bigcap_{v \in \Delta} \mf{m}_v$.
In other words, for all $v \in \Delta$ we have $p(p-1)v(\tau^{n}t^M \Disc(f)) > pv(\Disc(f)) - (p-1)^2v(f(0))$, which in view of \Cref{L:Krasner} implies that $A$ is split over the splitting field of $f_{\tau^{n}t^M \Disc(f)}$ over $F_w$.
Now use \Cref{T:LGP-Mehmeti} to conclude in the special case where $K$ is complete with respect to a $\zz$-valuation $v_K$.

Now assume just that $K$ is large.
Consider $\tilde{K} = K(\!(t)\!)$, the field of formal Laurent series over $K$.
Then $\tilde K$ is complete with respect to a $\zz$-valuation, and $K$ is existentially closed in $\tilde{K}$ (denoted $K \prec_\exists \tilde K$) \cite[Proposition 2.4]{Pop_LittleSurvey}, i.e.~every existential $\Lar(K)$-sentence which holds in $\tilde K$, also holds in $K$.
In particular $\tilde K/K$ is regular, and we may consider the compositum $\tilde F = F\tilde{K}$, which is a function field in one variable over $\tilde K$.
We have that $F \prec_\exists \tilde F$ \cite[Lemma 7.2]{DDF}.
Also, by \Cref{P:Koenigsmann-defining-constants} there is an existential $\Lar$-formula which defines $K$ in $F$ and $\tilde K$ in $\tilde F$.
We infer the following sequence of implications:
\begin{align*}
&t \in \bigcap_{w \in \Delta_K A} \mc{O}_w, \\
\enspace\Rightarrow\enspace &t \in \bigcap_{w \in \Delta_{\tilde K} A_{\tilde F}} \mc{O}_w, \\
\enspace\Rightarrow\enspace &\exists a \in \tilde K^{\times} : A_{\tilde F} \text{ is split over the splitting field of } f_{at^M\Disc(f)} \text{ over } \tilde F, \\
\enspace\Rightarrow\enspace &\exists a \in K^\times : A \text{ is split over the splitting field of } f_{at^M\Disc(f)} \text{ over } F.
\end{align*}
Here, the first implication follows because the henselisation of a valuation in $\Delta_{\tilde K} A_{F'}$ definitely contains the henselisation of its restriction to $F$, and thus $\Delta_K A \supseteq \lbrace w\vert_{F} \mid w \in \Delta_{\tilde K} A_{\tilde F} \rbrace$. The second implication follows from the already established case above, since $\tilde K$ carries a complete $\zz$-valuation. The last implication follows from the fact that $F \prec_\exists \tilde F$ and the existential formula defining $\tilde K$ in $\tilde F$ also defines $K$ in $F$.
\end{proof}
\begin{thm}\label{T:Edefinability}
Consider fields $K_1 \subseteq K$ where $K_1$ is large.
Let $F/K$ be a function field in one variable, and $F_0 \subseteq F$ a subfield such that $K[\sqrt{-1}]$ does not contain the algebraic closure of $K_0 = F_0 \cap K_1$.
Let $v \in \mc V(F/K)$ and assume that $F_0/K_0$ is a function field in one variable containing a uniformiser for $v$.

There exists $D \subseteq F$ existentially definable in $F$ with parameters in $F_0$ and such that
$$ \bigcap_{w \in C} \mc{O}_w \cap F_1 \subseteq D \subseteq \bigcap_{w \in C} \mc{O}_w$$
where $C$ is the set of $w \in \mc{V}(F/K)$ with $w\vert_{F_0} \sim v\vert_{F_0}$ and $F_1$ is any subfield of $F$ containing $F_0K_1$ such that $F_1/K_1$ is a function field in one variable.
\end{thm}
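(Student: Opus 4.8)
The plan is to follow the strategy of the proof of \cite[Theorem I]{BDD}, replacing anisotropic quadratic forms by central simple algebras of prime degree and feeding the outcome into \Cref{P:mainProp}. Write $v_0 := v|_{F_0}$, a non-trivial (hence divisorial) valuation in $\mc V(F_0/K_0)$ with residue field $\kappa := F_0v_0$ finite over $K_0$. Since $K_0\subseteq K$ and $K[\sqrt{-1}]$ does not contain the algebraic closure of $K_0$, neither $K_0$ nor $\kappa$ is algebraically or real closed, so $\kappa[\sqrt{-1}]$ is not algebraically closed by \Cref{P:reduceToCyclic}, parts (1) and (2). By \Cref{P:reduceToCyclic}(3), after replacing $F_0$ by a finite extension (and $F$ by the corresponding compositum) so as to realise a suitable finite extension of $\kappa$ as a residue field over $v_0$ --- a reduction that is harmless, because existential definability with parameters and the relevant intersections of valuation rings descend along finite extensions --- we may fix a prime $p$ and a normal degree-$p$ extension $\ell$ of $\kappa$. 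Applying \Cref{L:rightpoly} in the henselisation $(F_0)_{v_0}$ and then descending with the help of \Cref{L:Krasner} and weak approximation, we obtain a monic $f\in F_0[X]$ of degree $p$, integral at $v_0$, which reduces at $v_0$ to an irreducible polynomial with splitting field $\ell$, and whose splitting field $F_0'$ over $F_0$ is cyclic of degree $p$. With $\sigma$ a generator of $\Gal(F_0'/F_0)$ and $\pi$ a uniformiser for $v_0$, the cyclic algebra $A:=(F_0'/F_0,\sigma,\pi)$ is split over $F_0'$ and non-split over $(F_0)_{v_0}$, so $v_0\in\Delta_{K_0}A$.

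The set $D$ will be a finite intersection of sets of the form $\{\,t\in F \mid \exists a\in F:\ \varphi_g(a)\ \wedge\ a\neq 0\ \wedge\ A_F\text{ splits over the splitting field of }f_{at^{M}\Disc(f)}\,\}$, one for each of finitely many pairs $(A,f)$ obtained as in the previous paragraph, where $\varphi_g$ is the formula of \Cref{P:Koenigsmann-defining-constants} for $g$ an upper bound on the genera of $F/K$ and of $F_0K_1/K_1$, and $M\in\nat$ is chosen large (depending on the pair). Such a $D$ is existentially definable in $F$ with parameters in $F_0$, since the structure constants of $A$ and the coefficients of $f$ lie in $F_0$ and splitting of a central simple algebra over a primitively generated field extension is an existential condition. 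For the inclusion $D\subseteq\bigcap_{w\in C}\mc O_w$, note that $C$ is finite because $F/F_0K$ is finite, and that any $a$ witnessing $t\in D$ for some pair satisfies $\varphi_g(a)$, hence lies in $K^\times$ by \Cref{P:Koenigsmann-defining-constants}; the ``easy'' implication of \Cref{P:mainProp} over $F/K$, which requires no largeness, then gives $t\in\bigcap_{w\in\Delta_KA_F}\mc O_w$. One chooses the finitely many pairs $(A,f)$ so that the union of the corresponding sets $\Delta_KA_F$ contains $C$: for each of the finitely many places $w$ of $F$ over $v_0$ one wants $A_F$ to stay non-split over $F_w$ for at least one pair, which holds once the residue extension $\ell$ of that $A$ satisfies $\ell\not\subseteq Fw$ and $p\nmid e(w/v_0)$, and this is arranged by a judicious choice of the prime $p$ and of the degree-$p$ extension of $\kappa$. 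The remaining hypothesis of \Cref{P:mainProp} --- that $f\in\mc O_w[X]$ for every $w\in\Delta_KA_F$ --- is met by weak approximation during the construction, the sets $\Delta_KA_F$ being finite by \Cref{L:Delta-compact}.

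For the reverse inclusion, fix an admissible $F_1$ and $t\in\bigcap_{w\in C}\mc O_w\cap F_1$; restriction maps $C$ onto $C_1:=\{u\in\mc V(F_1/K_1)\mid u|_{F_0}\sim v_0\}$, and $\bigcap_{w\in C}\mc O_w\cap F_1=\bigcap_{u\in C_1}\mc O_u$. For each of the pairs $(A,f)$ one applies the ``hard'' implication of \Cref{P:mainProp} over $F_1/K_1$ --- legitimate since $K_1$ is large --- to $A_{F_1}$ and $f$: provided $t\in\bigcap_{u\in\Delta_{K_1}A_{F_1}}\mc O_u$, it produces $a\in K_1^\times$, and such an $a$ lies in $\varphi_g(F)$ (because $F_0K_1\subseteq F$, existential formulas pass up to $F$, and $\varphi_g(F_0K_1)=K_1$ as $K_1$ is large and $F_0K_1/K_1$ has genus at most $g$), so $a$ witnesses membership of $t$ in the set attached to $(A,f)$; hence $t\in D$ once $\Delta_{K_1}A_{F_1}\subseteq C_1$ for every pair. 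Since splitting persists under field extension, $\Delta_{K_1}A_{F_1}$ is contained in $\{u\mid u|_{F_0}\in\Delta_{K_0}A\}$, so it suffices that each place of $\Delta_{K_0}A$ other than $v_0$ become split already over the constant extension $F_0K_1$. Achieving this --- choosing each $A$ so that, besides its prescribed ramification at $v_0$, all of its ramification lies at places whose residue fields acquire the relevant degree-$p$ cyclic extension inside the constant extension by $K_1$, while simultaneously respecting the reciprocity constraint forcing the global class over $F_0$ to carry such auxiliary ramification and keeping the residue-field conditions of the previous paragraph --- is the crux of the argument and the main obstacle. This is where the largeness of $K_1$ enters, by supplying the needed ``constant room'' (the case where $K_0$ is itself large is dealt with separately, by applying \Cref{P:mainProp} directly over $F_0/K_0$ and taking $F_1=F_0K_1$), and where, through \Cref{P:reduceToCyclic}, the non-algebraic-closedness of $K_0[\sqrt{-1}]$ is used; \Cref{T:LGP-Mehmeti} is invoked inside the proof of \Cref{P:mainProp}.
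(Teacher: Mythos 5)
You assemble the right ingredients (a cyclic degree-$p$ algebra with a uniformiser in the slot, \Cref{L:rightpoly} for the polynomial, \Cref{P:mainProp} applied over both $F/K$ and $F_1/K_1$, \Cref{P:Koenigsmann-defining-constants} to quantify over constants, and a finite-extension reduction via \Cref{P:reduceToCyclic}), but the step you yourself label ``the crux of the argument and the main obstacle'' --- choosing finitely many pairs $(A,f)$ with $\bigcup \Delta_K A_F \supseteq C$ while each $\Delta_{K_1}A_{F_1}$ is contained in the set of places of $F_1$ above $v\vert_{F_0}$ --- is exactly the step you never carry out, and it is not how the paper closes the argument. The paper makes no attempt to confine the non-split locus of its single algebra $A=(F_0''/F_0',\sigma,\pi)$ to the places above $v$: the set $\Delta_K A_F$ is allowed to contain places unrelated to $v$ (and may even be infinite). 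Instead, the set $D'$ furnished by \Cref{P:mainProp} is composed with the rational substitution $t \mapsto t^p/g(\pi t,\alpha)$, where $g(X,\alpha_1)=f$ and $\alpha$ runs over the conjugates of $\alpha_1$; because $f$ is integral with irreducible reduction at every $w\in\Delta_K A_F$ and $\pi$ is arranged by strong approximation to have non-positive value at the relevant places away from $v$, one gets the exact equivalence $t^p/g(\pi t,\alpha)\in\bigcap_{w\in\Delta_K A_F}\mc O_w \Leftrightarrow t\in\bigcap_{w\in C}\mc O_w$, which converts the uncontrolled sandwich from \Cref{P:mainProp} into the desired one. This substitution trick is the missing idea; without it, or a genuine execution of your ``judicious choice'' of ramification subject to reciprocity (precisely the difficulty the paper's device is designed to avoid), the proof is incomplete.

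There is also a concrete error in how you try to meet the hypothesis $f\in\mc O_w[X]$ of \Cref{P:mainProp}: you assert that $\Delta_K A_F$ is finite ``by \Cref{L:Delta-compact}'' and then fix $f$ by weak approximation. \Cref{L:Delta-compact} gives closedness and compactness in $\mathbb{V}(F)$, not finiteness, and $\Delta_K A$ is in general infinite: an algebra can be non-split over the henselisation at infinitely many places at which it is unramified, simply because the residue algebra is non-split there. Moreover $\Delta_K A_F$ depends on $A$, hence on $f$ and $\pi$, so one cannot first inspect $\Delta_K A_F$ and then adjust $f$. The paper resolves this circularity in the opposite direction: it lists the finitely many places where $f$ fails to be integral with separable irreducible reduction, and chooses $\pi$ (via Strong Approximation and \Cref{L:norms-are-open}) to be a local norm of $F_0''(F_0')_w/(F_0')_w$ at those places, so that they are split and hence excluded from $\Delta$; this is what guarantees, at every $w\in\Delta_K A_F$, both $f\in\mc O_w[X]$ and the unit constant term and discriminant needed for \Cref{P:mainProp} and for the substitution step.
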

\begin{proof}

\textbf{Special case:} We first consider the special case where each of the following assumptions is satisfied:
\begin{itemize}
\item $Fv$ has a normal field extension of degree $p$ for some prime number $p$, and this field extension can be given as the root field of a degree $p$ polynomial defined over $F_0'v$, where $F_0' = F_0 K_0'$ for some finite Galois extension $K_0'/K_0$ contained in $K$,
\item if $\car(K) \neq p$, then $K_0$ contains a $p$th root of unity.
\end{itemize}

We now apply \Cref{L:rightpoly} to $F_0'$ and the valuation $v\vert_{F_0'}$ to obtain $f \in (\mc{O}_{v} \cap F_0')[X]$ such that the splitting field $F_0''$ of $f$ is a cyclic extension of degree $p$ of $F_0'$, and $f$ is irreducible modulo $\mf{m}_{v}$.
Since $K_0'/K_0$ is Galois, it is generated by some element $\alpha_1$.
Let $\alpha_1, \ldots, \alpha_m$ denote all roots of $\alpha_1$'s minimal polynomial $h(X) \in K_0[X]$.
We may further find $g(X, Y) \in (\mc{O}_v \cap F_0)[X, Y]$ such that $f(X) = g(X, \alpha_1)$.
Observe that each $\alpha_i$ generates $F_0'$ over $F_0$, and furthermore, that the splitting field of $g(X, \alpha_i)$ over $F_0'$ is equal to $F_0''$ for each $i$.

Let $S \subset \mc{V}(F_0/K_0)$ be the set of restrictions to $F_0$ of valuations $w \in \mc{V}(F_0'/K_0')$ for which there exists $i \in \lbrace 1, \ldots, m \rbrace$ such that either one of the coefficients of $g(X, \alpha_i)$ does not lie in $\mc{O}_w$, or the discriminant of $g(X, \alpha_i)$ lies in $\mf{m}_w$.
Note that $S$ is finite (since for every non-zero $a \in F_0'$, there are only finitely many $w \in \mc{V}(F_0'/K_0')$ for which $a \not\in \mc{O}_w^\times$).
By Strong Approximation \cite[Theorem 1.6.5]{Sti09} we may find $\pi \in F_0^\times$ such that
\begin{itemize}
\item $v(\pi) = 1$,
\item for any $w \in S \setminus \lbrace v \rbrace$, $\pi$ is a norm of $F''_0(F_0')_w/(F_0')_w$ (by \Cref{L:norms-are-open} it suffices to let $w(\pi - 1)$ be high enough),
\item for any $w \in \mc{V}(F_0/K_0) \setminus \{ v \}$, if $f$ is irreducible over $(F_0)_w$, then $w(\pi) \leq 0$.
\end{itemize}

Fix a generator $\sigma$ of $\Gal(F''_0/F_0')$ and define the cyclic $F'_0$-algebra $A = (F''_0/F'_0, \sigma, \pi)$.
We have that $v \in \Delta_K A_F$: indeed we have $A_{F_v} \cong (F_0''F_v/F_v, \sigma, \pi)$, and since $f$ is irreducible modulo $\mf{m}_v$, the norm map $N_{F''_0F_v/F_v}$ represents only elements whose $v$-value is a multiple of $p$, in particular, $\pi \not\in N_{F''_0F_v/F_v}(F_0'' F_v)$.
On the other hand, for $w \in \Delta_{K_0'} A \setminus \lbrace v \rbrace$ we have $v \not\in S$ by the choice of $\pi$. This implies that $f \in \mc{O}_w[X]$ and $f$ is separable irreducible modulo $\mathfrak{m}_w$, whereby in particular $f(0) \in \mc{O}_w^\times$.
Since the restriction of a valuation in $\Delta_K A_F$ to $F_0'$ is either trivial or equivalent to a valuation in $\Delta_{K_0'} A$, we in particular have $f(0) \in \mc{O}_w^\times$ for all $w \in \Delta_K A_F$.

Let $g$ be the genus of the function field $F/K$, let $\varphi_g(x)$ be the existential $\Lar$-formula from \Cref{P:Koenigsmann-defining-constants}, and consider the set
$$ D' = \left\lbrace t \in F \middle| \exists a \in F : \begin{array}{l}
F \models \varphi_g(a), a \neq 0, \text{ and } A \text{ is split over} \\ \text{the splitting field of } f_{at^M\Disc(f)} \text{ over } F
\end{array} \right\rbrace. $$
This subset of $F$ is $\exists$-$\Lar(F_0)$-definable (we can eliminate the parameter $\alpha_1$ by existentially quantifying over the roots of its minimal polynomial $h(X)$, since they all generate the same extension $F_0''/F_0'$.)
By \Cref{P:Koenigsmann-defining-constants} we have $K_1 \subseteq \varphi_g(F) \subseteq K$.
By \Cref{P:mainProp} applied to the function fields $F/K$ and $F_1/K_1$ we see that the set $D'$ satisfies
$$ \bigcap_{w \in \Delta_{K_1} A_{F_1}} \mathcal{O}_w \subseteq D' \subseteq \bigcap_{w \in \Delta_K A_F} \mathcal{O}_w.$$

By the choice of $A$ we have for all $w \in \Delta_K A_F$ that $f \in \mc{O}_w[X]$ and that $g(X, \alpha_i)$ is irreducible over $Fw$ for all $i$.
We compute that for $t \in F$ we have
\begin{displaymath}
\frac{t^p}{g(\pi t, \alpha_i)} \in \bigcap_{w \in \Delta_K A_F} \mathcal{O}_w \Leftrightarrow t \in \bigcap_{w \in C}\mathcal{O}_{w}.
\end{displaymath}
Thus, the set
$ D = \lbrace t \in F \mid \exists \alpha \in F (\frac{t^p}{g(\pi t, \alpha)} \in D' \wedge h(\alpha) = 0) \rbrace $
is as desired.
This concludes the proof in the special case under consideration.

\textbf{General case:}
With the help of \Cref{P:reduceToCyclic} we may find a finite Galois extension $\tilde{K_0}/K_0$ such that, setting 
$\tilde{K} = K\tilde{K_0}$ and $\tilde{F} = F\tilde{K_0}$ (composition within some common overfield of $F$ and $\tilde{K_0}$ over $K$), and fixing an extension $\tilde{v}$ of $v$ to $\tilde{F}$, we have that there exists a prime number $p$ such that
\begin{itemize}
\item $\tilde{F}\tilde{v}$ has a normal field extension of degree $p$, and this field extension can be given as the root field of a degree $p$ polynomial defined over $\tilde{F_0}'\tilde{v}$, where $\tilde{F_0}' = F_0 \tilde{K_0}'$ for some finite Galois extension $\tilde{K_0}'/K_0$ contained in $\tilde{K}$,
\item if $\car(K) \neq p$, then $\tilde{K_0}$ contains a $p$th root of unity,
\end{itemize}
By the special case applied to the compositum $\tilde{F}$ (with  $K_1$ replaced by $K_1\tilde{K_0}$), and setting $\tilde{F_1} = F_1\tilde{K_0}$, we find the existence of an existentially definable subset $\tilde{D}$ of $\tilde{F}$ with parameters in $F_0$ such that $\bigcap_{\tilde{w} \in \tilde{C}} \mc{O}_{\tilde{w}} \cap \tilde{F_1} \subseteq \tilde{D} \subseteq \bigcap_{\tilde{w} \in \tilde{C}} \mc{O}_{\tilde{w}}$, where $\tilde{C}$ is the set of those $\tilde{w} \in \mc{V}(\tilde{F}/\tilde{K})$ with $\tilde{w}\vert_{\tilde{F_0}} \sim \tilde{v}\vert_{\tilde{F_0}}$.
The existential definability of $D = \tilde{D} \cap F$ now follows by a standard interpretation argument, see \cite[Lemma 2.1]{Daans-charp}.
\end{proof}
\begin{proof}[Proof of {\Cref{TI:definability}}]
This is \Cref{T:Edefinability} with $K_1 = K$ and $F_0 = F_1 = F$.
\end{proof}
\begin{opm}[{Uniformity in \Cref{T:Edefinability}}]\label{R:Uniformity}
In the proof of \Cref{T:Edefinability}, the reduction of the general case to the special case depends on interpreting a well-chosen finite field extension, which may be of arbitrarily high degree.
This makes the obtained formula non-uniform.
Another obstruction to uniformity is given via the reliance on \Cref{P:Koenigsmann-defining-constants}, which depends on a bound for the genus of the function field.

However, inspection of the proof reveals that the construction of the existential formula in the special case in the proof of \Cref{T:Edefinability} \textit{is} uniform, at least if one additionally assumes $K_0' = K_0$.
To be more precise, the proof shows the following:
\begin{quotation}
\noindent\textit{Let $p$ be a prime number and $g \in \nat$.
There exists $m \in \nat$ and an existential $\Lar$-formula $\varphi_{p,g}(x, y_1, \ldots, y_m)$ with the following property.}

\noindent\textit{Let $F/K$ be a function field in one variable of genus at most $g$. Let $K_1$ be a large subfield of $K$, and $F_0$ a subfield of $F$ which is a function field in one variable over $F_0 \cap K_1$.
Consider $v \in \mc{V}(F/K)$ which has a uniformiser in $F_0$ and for which $F_0v$ admits a normal degree $p$ field extension not contained in $Fv$.
Then there exist $b_1, \ldots, b_m \in F_0$ such that $D = \lbrace a \in F \mid F \models \varphi_{p, g}(a, b_1, \ldots, b_m) \rbrace$ satisfies
$$ \bigcap_{w \in C} \mc{O}_w \cap F_1 \subseteq D \subseteq \bigcap_{w \in C} \mc{O}_w$$
where $C$ is the set of $w \in \mc{V}(F/K)$ with $w\vert_{F_0} \sim v$, and $F_1$ is a subfield of $F$ containing $F_0K_1$ such that $F_1/K_1$ is a function field in one variable.}
\end{quotation}
\end{opm}

\section{Application to undecidability of existential theories}
We shall now discuss an application to undecidability of existential theories.

Let us state our set-up precisely.
Consider a field $F$ with a subfield $F_0$ which is finitely generated over its prime field.
The field $F_0$ is naturally \emph{computable}, i.e.~ there is a bijection (enumeration) $\nat \to F_0$ which makes the graphs of addition and multiplication on $F_0$ correspond to computable subsets of $\nat^3$.
This allows us to effectively enumerate the set of $\Lar(F_0)$-formulas.
A set of $\Lar(F_0)$-sentences is called \emph{decidable} if, via this enumeration, the set corresponds to a computable subset of $\nat$.
In particular, we can consider the set of all existential $\Lar(F_0)$-sentences which hold in $F$, called the \emph{existential $\Lar(F_0)$-theory of $F$}, and ask whether this is decidable or undecidable.
This property does not, in fact, depend on the specific choice of bijection $\nat \to F_0$ (since finitely generated fields are so-called \emph{computably stable}).
A reader who dislikes thinking about decidability of theories in infinite first-order languages may alternatively fix generators $\alpha_1, \ldots, \alpha_d$ of $F_0$ and ask about the decidability of the set of existential $\Lar(\{ \alpha_1, \ldots, \alpha_d \})$-sentences which hold in $F$; this is equivalent to the decidability of the existential $\Lar(F_0)$-theory of $F$.
Finally, we remark that the decidability of the existential $\Lar(F_0)$-theory of $F$ is equivalent to the ad hoc definition given in the introduction for decidability of the existential theory of $F$ with coefficients in $F_0$.
We refer the interested reader to \cite[Sections 2 \& 3]{Handbook-CompRingsFields} for background on computable rings and fields, and to \cite[Section 7]{HMLElementsOfRecursion} for more on decidability in first-order logic.

The following proposition states a general close relation between existential definability of valuation rings and undecidability of the existential theory of a function field in one variable.
In positive characteristic the underlying technique goes back to Pheidas \cite{PheidasHilbert10} (and later Videla in characteristic $2$ \cite{Videla}), who proved it for rational function fields over finite fields. The general statement was further developed in \cite[Lemma 1.5]{Shl96} and later proven in complete generality in \cite[Section 2]{EisShlap17}. 
In \cite[Corollary 5.2]{Daans-charp} a presentation is given with focus on keeping precise track of the used parameters.

For rational function fields of characteristic $0$, the proposition was essentially proven by Denef \cite[Section 3]{DenefDiophantine}.
The case of general function fields in one variable of characteristic $0$ was investigated in detail in \cite{Mor05} (and partially independently in \cite{EisHil10padic}).
Since the conditions of the geometric set-up of \cite{Mor05} take some effort to translate into our more algebraic framework (in particular it is not trivial to keep track of the used parameters), we include here the relevant parts of the argument for the reader's convenience in the case $\car(F) = 0$.

\begin{prop}\label{P:H10-reduction}
Let $F/K$ be a function field in one variable.
Let $F_0 \subseteq F$ be a finitely generated subfield such that $F = F_0K$.
Assume that there is a valuation $v \in \mc V(F/K)$ and an $\exists$-$\Lar(F_0)$-definable subset $\mc O$ of $F$ such that
$$ F_0 \neq \mc O_v \cap F_0 \subseteq \mc O \subseteq \mc O_v.$$
Then the existential $\Lar(F_0)$-theory of $F$ is undecidable.
\end{prop}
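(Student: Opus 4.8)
The plan is to reduce Hilbert's Tenth Problem over $\zz$ — undecidable by the Matiyasevich--Robinson--Davis--Putnam theorem \cite{Mat70} — to the existential $\Lar(F_0)$-theory of $F$. Since $\car(F)=0$ we have $\zz\subseteq\qq\subseteq F_0\subseteq F$, so it suffices to produce an $\exists$-$\Lar(F_0)$-definable subset $Z\subseteq F$ together with $\exists$-$\Lar(F_0)$-definable ternary relations making $(Z;\oplus,\otimes)$ isomorphic to $(\zz;+,\cdot)$: a polynomial system over $\zz$ then translates into an existential $\Lar(F_0)$-sentence over $F$ (each variable constrained to $Z$) of the same truth value, and undecidability transfers. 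In positive characteristic I would only cite the classical reduction — due to Pheidas \cite{PheidasHilbert10} (and Videla \cite{Videla} for $p=2$), developed in \cite{Shl96,EisShlap17} with the parameter tracking of \cite{Daans-charp} — which rests on the Frobenius/$p$-th power trick rather than on what follows; as the excerpt says, the reproduced argument concerns only $\car(F)=0$.

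So assume $\car(F)=0$. First I would extract from $\mc{O}$ a workable existential grip on the place $v$: from $\mc{O}_v\cap F_0\subseteq\mc{O}\subseteq\mc{O}_v$ one reads that $\mc{O}$ consists of $v$-integral elements and contains \emph{every} $v$-integral element of $F_0$, so, multiplying by elements of $F_0^\times$ of prescribed $v$-value and by a uniformiser of $v\vert_{F_0}$ (which exists since $\mc{O}_v\cap F_0\neq F_0$), and substituting $x\mapsto x-a$, $x\mapsto (x-a)^{-1}$ for $a\in\qq$, one obtains $\exists$-$\Lar(F_0)$-definable sets encoding $v$-adic (and nearby-place) boundedness conditions. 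This is the algebraic shadow of the geometric input in Moret-Bailly \cite{Mor05} (compare also \cite{EisHil10padic}).

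Next I would run the elliptic-curve strategy of Denef \cite[Section~3]{DenefDiophantine} and Moret-Bailly \cite{Mor05}. Fix an elliptic curve $E$ over $\qq\subseteq F_0$ of Mordell--Weil rank $1$ with a generator $P_0$ of the free part of $E(\qq)$, so that $\{nP_0:n\in\zz\}$ is a transversal of the torsion in $E(\qq)$. In general $E(F)$ is far larger — it contains all points over the (possibly large) constant field of $F$, and, if the smooth projective model of $F$ has positive genus, possibly non-constant points. The decisive use of the hypothesis is to \emph{trap} $E(F)$: conjoining the group law on $E$ with the $v$-integrality constraints above, applied to the coordinates of a point at $v$ and at a few of its translates, one cuts $E(F)$ down, modulo torsion, to exactly $\{nP_0:n\in\zz\}$, producing an $\exists$-$\Lar(F_0)$-definable model of $(\zz,+)$. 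One then upgrades to $(\zz,+,\cdot)$ by the standard device (a second auxiliary curve, or a Pell-type recursion) expressing that one integer is the product of two others; being itself an existential relation over $\zz$, this transports to the interpreted copy, and \cite{Mat70} finishes the case $\car(F)=0$. Reducing a general one-variable function field over a characteristic $0$ base to this rational-field situation is then routine, as in \cite{Mor05,EisHil10padic}.

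The step I expect to be the real obstacle is the trapping together with the parameter bookkeeping: one must verify that the formulas assembled from $\mc{O}$, the $F_0$-rational substitutions, and the equations of $E$ genuinely pin down $\zz$ when interpreted inside the possibly large field $F$ (not merely inside $\qq(\tau)$ for a transcendental $\tau\in F_0$), using only that $\mc{O}$ is squeezed between $\mc{O}_v\cap F_0$ and $\mc{O}_v$ rather than equal to a prescribed valuation ring, and that every parameter occurring lies in $F_0$. The hypotheses $\mc{O}_v\cap F_0\neq F_0$ and $F=F_0K$ are exactly what make this feasible: the former guarantees that $v$ restricts to a nontrivial place of the arithmetically meaningful subfield $F_0$, and the latter that $F_0$-parameters suffice to describe all of $F$.
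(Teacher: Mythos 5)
Your overall frame (reduce to Hilbert's Tenth Problem over $\zz$ via a positive-existential interpretation, quote the known positive-characteristic reduction, run a Denef--Moret-Bailly elliptic curve argument in characteristic $0$) matches the paper, but the central step of your characteristic-$0$ argument has a genuine gap. You propose to take a \emph{constant} elliptic curve $E$ over $\qq$ of rank one and to ``trap'' $E(F)$ down to $\{nP_0 : n \in \zz\}$ (modulo torsion) by imposing $v$-integrality conditions, built from $\mc O$, on coordinates of a point and a few translates. This cannot work: $v \in \mc V(F/K)$ is trivial on $K$, so every constant point of $E(K)$ has $v$-integral coordinates and passes any such test, and when $K$ is large (the situation the paper cares about) $E(K)$ is typically infinite; moreover $\mc O$ is only pinned down on $F_0$ (one merely has $\mc O_v \cap F_0 \subseteq \mc O \subseteq \mc O_v$), so it gives no usable information about coordinates lying in $K \setminus F_0$. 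The paper avoids this problem entirely rather than solving it by formulas: it works with the \emph{non-constant quadratic twist} $\tilde E : (T^3+T+2)Y^2Z = X^3+XZ^2+2Z^3$ over $\qq(T)$, where $T \in F_0$ is chosen with simple ramification/zeroes/poles and $T \notin \mc O_v$; by Moret-Bailly (after rescaling $T$ by a suitable integer) $\tilde E(F) = \tilde E(K(T))$, and by Denef this group is generated by the single point $P=(T:1:1)$ of infinite order together with $2$-torsion. The smallness of the Mordell--Weil group is thus an arithmetic-geometric input, not something cut out by existential conditions, and it is the key idea your sketch is missing.

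A second, related gap is the definition of multiplication. You defer it to ``a second auxiliary curve, or a Pell-type recursion,'' but for a constant curve there is no evident existential device of this kind; in the actual argument multiplication is interpreted via Denef's approximation lemma, which says that the multiples $mP=(x_m:y_m:z_m)$ of the special point of the twist satisfy $\frac{x_m}{Ty_m} - m \in \mf m_v$, so that $\iota(P_1)\iota(P_2)=\iota(P_3)$ becomes a congruence modulo $\mf m_v$ among elements of $\qq(T) \subseteq F_0$. This is precisely the (only) place where the hypothesis on $\mc O$ is used: because the relevant quantities lie in $F_0$ and $v$ is nontrivial on $F_0$, membership in $\mf m_v$ can be rewritten as membership in $\mc O$ after multiplying by a power of $T$, yielding an $\exists$-$\Lar(F_0)$-formula. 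Without the twist and the slope formula, your interpretation of multiplication, and hence the reduction from MRDP, does not go through.
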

\begin{proof}
See \cite[Corollary 5.2]{Daans-charp} for a proof of the case where $\car(F) > 0$.
For the rest of the proof, assume $\car(F) = 0$.

We may assume without loss of generality that $K$ is relatively algebraically closed in $F$.
Setting $K_0 = F_0 \cap K$ we obtain that $K_0$ is relatively algebraically closed in $F_0$, whereby $F_0/K_0$ is a regular function field in one variable.
Let $C$ be a smooth projective curve over $K_0$ such that $F_0$ is the function field of $K_0$.
Consider the elliptic curve $E$ over $\qq$ given by the homogeneous equation
$$ E : Y^2Z = X^3 + XZ^2 + 2Z^3. $$
Fix $T \in F_0$ with the property that, considered as a morphism $C \to \mbb P^1_K$, $T$ has only simple ramification, simple zeroes, and simple poles, and furthermore $T \not\in \mc{O}_v$.
The existence of such $T$ is guaranteed by \cite[Proposition 2.3.1 and Remark 2.3.3]{Mor05} and the assumption that $F_0 \neq \mc{O}_v \cap F_0$.
Then consider the elliptic curve $\tilde{E}$ over $\qq(T)$ obtained as a quadratic twist of $E$ and with explicit equation
$$ \tilde{E} : (T^3 + T + 2)Y^2Z = X^3 + XZ^2 + 2Z^3.$$
Replacing $T$ by $\lambda T$ for some well-chosen $\lambda \in \zz \setminus \lbrace 0 \rbrace$ (which does not affect the already imposed conditions on $T$), we may further assume that $\tilde{E}(K(T)) = \tilde{E}(F)$, i.e.~every $F$-rational point of $\tilde{E}$ is defined over $K(T)$ \cite[Theorem 1.8(ii)]{Mor05}.
By \cite[Lemma 3.1]{DenefDiophantine} the group of rational points $\tilde{E}(K(T))$ is generated by a single point of infinite order $P = (T : 1 : 1)$, and $2$-torsion points.
Putting everything together, we obtain an isomorphism of abelian groups
$$ \iota : 2 \cdot \tilde{E}(F) \to \zz : 2mP \mapsto m$$
where $2mP$ denotes the element of $\tilde{E}(F)$ obtained by adding $P$ to itself $2m$ times according to the elliptic curve group law, and $2 \cdot \tilde{E}(F) = \lbrace 2Q \mid Q \in \tilde{E}(F) \rbrace$.

We claim that, by viewing $2 \cdot \tilde{E}(F)$ as a subset of $F^3$, $\iota$ gives a positive-existential interpretation of the ring $\zz$ inside the field $F$ with parameters in $F_0$.
Since the existential $\Lar$-theory of $\zz$ is undecidable by the theorem of David, Putnam, Robinson, and Matiyasevich \cite{Mat70}, the undecidability of the existential $\Lar(F_0)$-theory of $F$ would follow.
It remains to show that each of the following sets is $\exists$-$\Lar(F_0)$-definable:
\begin{align*}
& 2 \cdot \tilde{E}(F), \enspace \iota^{-1}(0), \enspace \iota^{-1}(1), \\
& \lbrace (x_1, y_1, z_1, x_2, y_2, z_2) \in (2 \cdot \tilde{E}(F))^3 \mid (x_1 : y_1 : z_1) = (x_2 : y_2 : z_2) \rbrace, \\
& \lbrace (P_1, P_2, P_3) \in (2 \cdot \tilde{E}(F))^3 \mid \iota(P_1) + \iota(P_2) = \iota(P_3) \rbrace, \\
& \lbrace (P_1, P_2, P_3) \in (2 \cdot \tilde{E}(F))^3 \mid \iota(P_1) \cdot \iota(P_2) = \iota(P_3) \rbrace.
\end{align*}
For all but the last set the $\exists$-$\Lar(F_0)$-definability follows easily from the explicit equation defining $\tilde{E}$ and the explicit formulas describing the elliptic curve group law in terms of the coordinates.

For $m \in \zz$, let $(x_m, y_m, z_m) \in F^3$ be such that $mP = (x_m : y_m : z_m)$.
By \cite[Lemma 3.2]{DenefDiophantine} we have that $\frac{x_m}{Ty_m} - m \in \mf{m}_v$.
It thus follows that if $P_i = (a_i : b_i : c_i) \in 2 \cdot \tilde{E}(F)$, then
$$ \iota(P_1) \cdot \iota(P_2) = \iota(P_3) \enspace\Leftrightarrow\enspace \frac{a_1 a_2}{2T^2 b_1 b_2} - \frac{a_3}{Tb_3} \in \mf{m}_v.$$
The right condition can indeed by described by an $\exists$-$\Lar(F_0)$-formula: since $(a_i : b_i : c_i) \in 2 \cdot \tilde{E}(F) = 2\tilde{E}(\qq(T))$ forces $\frac{a_i}{Tb_i} \in \qq(T) \subseteq F_0$, and $T$ is a uniformiser for $v$, the hypotheses on $\mc{O}$ imply that
$$ \frac{a_1 a_2}{2T^2 b_1 b_2} - \frac{a_3}{Tb_3} \in \mf{m}_v \enspace\Leftrightarrow\enspace \frac{a_1 a_2}{2T^3 b_1 b_2} - \frac{a_3}{T^2b_3} \in \mc{O}.$$
From this is follows that via $\iota$, also the multiplication on $\zz$ is positive-existentially interpretable, which concludes the proof.
\end{proof}
\begin{proof}[Proof of \Cref{TI:decidability}]
This is a direct combination of \Cref{T:Edefinability} and \Cref{P:H10-reduction}.
\end{proof}
Let us mention the following application.
\begin{cor}\label{C:decidability-sepclosed}
Let $K$ be a field of characteristic $p > 0$, $\alpha \in K$ such that $\alpha$ has no $p$-th root in $K$.
Suppose that $K$ contains the separable closure of $\ff_p(\alpha)$.
Then the existential $\Lar$-theory of $K(T)$ with coefficients in $\ff_p(\alpha, T)$ is undecidable.
\end{cor}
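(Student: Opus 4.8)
The plan is to obtain the statement as a special case of \Cref{TI:decidability}. I would apply that theorem to $F = K(T)$, $F_0 = \ff_p(\alpha, T)$, and the large subfield $K_1 = \ff_p(\alpha)^{\mathrm{sep}}$ --- the separable closure of $\ff_p(\alpha)$ taken inside a fixed algebraic closure of $K$ --- which is contained in $K$ by hypothesis and is large because it is separably closed. Everything then comes down to verifying the (largely routine) hypotheses of \Cref{TI:decidability} for this data.

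First I would record the straightforward points: $F/K = K(T)/K$ is a function field in one variable; $F_0 = \ff_p(\alpha, T)$ is finitely generated over its prime field $\ff_p$; and $F$ is generated by $F_0$ and $K$, since $\ff_p(\alpha) \subseteq K$ already and hence $F_0 K = K(T)$. Next I would identify $K_0 := F_0 \cap K_1$. Any element of $F_0 = \ff_p(\alpha)(T)$ lying in $K_1$ is algebraic over $\ff_p(\alpha)$, and since $\ff_p(\alpha)$ is relatively algebraically closed in the rational function field $\ff_p(\alpha)(T)$, it must already lie in $\ff_p(\alpha)$; as the reverse inclusion $\ff_p(\alpha) \subseteq F_0 \cap K_1$ is clear, we get $K_0 = \ff_p(\alpha)$, whose algebraic closure $\overline{\ff_p(\alpha)}$ contains a $p$-th root of $\alpha$.

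It then remains to check that $K[\sqrt{-1}]$ does not contain $\overline{\ff_p(\alpha)}$. The key observation is that $\sqrt{-1} \in K$: the polynomial $X^2 + 1$ equals $(X-1)^2$ if $p = 2$ and is separable if $p$ is odd, so in either case it has a root in $K_1 = \ff_p(\alpha)^{\mathrm{sep}} \subseteq K$. Hence $K[\sqrt{-1}] = K$, and the required condition reduces to $\overline{\ff_p(\alpha)} \not\subseteq K$, which holds precisely because $\alpha$ has no $p$-th root in $K$ by assumption while $\overline{\ff_p(\alpha)}$ does contain one. With all hypotheses of \Cref{TI:decidability} verified, the undecidability of the existential $\Lar(F_0)$-theory of $F$ follows, i.e.~the existential $\Lar$-theory of $K(T)$ with coefficients in $\ff_p(\alpha, T)$ is undecidable.

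I do not anticipate a genuine obstacle: this corollary is a direct application, and the only mildly delicate points are the choice of $K_1$ as the separable closure of $\ff_p(\alpha)$ and the two elementary field-theoretic facts --- that $\ff_p(\alpha)$ is relatively algebraically closed in $\ff_p(\alpha)(T)$, and that $\sqrt{-1} \in \ff_p(\alpha)^{\mathrm{sep}}$ --- which together collapse the hypothesis on algebraic closures to the stated non-existence of a $p$-th root of $\alpha$ in $K$.
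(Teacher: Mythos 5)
Your proposal is correct and matches the paper's proof, which is exactly the one-line application of \Cref{TI:decidability} with $K_1$ the separable closure of $\ff_p(\alpha)$ and $F_0 = \ff_p(\alpha, T)$. Your verification of the hypotheses (identifying $K_0 = \ff_p(\alpha)$, noting $\sqrt{-1} \in K$, and reducing the condition to $\alpha^{1/p} \notin K$) fills in precisely the routine details the paper leaves implicit.
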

\begin{proof}
This follows from \Cref{TI:decidability} with $K_1$ equal to the separable closure of $\ff_p(\alpha)$ and $F_0 = \ff_p(\alpha, T)$.
\end{proof}

\printbibliography
\end{document}